\documentclass{amsart}
\usepackage{graphicx, amssymb, amsmath, amsfonts, mathtools, }
\usepackage{tikz, tikz-cd,xcolor}

\usepackage[numbers,sort&compress]{natbib}
\usepackage{hyperref}
\usepackage{aliascnt}
\usepackage[capitalise]{cleveref}
\usepackage{doi}
\usepackage{todonotes}
\newcommand{\rips}{\mathcal{VR}}
\DeclareMathOperator{\Cl}{Cl}
\DeclareMathOperator{\diam}{diam}
\newcommand{\field}{\Bbbk}

\newcommand{\Top}{\mathbf{Top}}
\newcommand{\Simp}{\mathbf{SCpx}}
\newcommand{\Vect}{\mathbf{Vec}}

\newtheorem{theorem}{Theorem}[section]
\crefname{theorem}{theorem}{theorems}
\Crefname{theorem}{Theorem}{Theorems}

\newaliascnt{lemma}{theorem}
\newtheorem{lemma}[lemma]{Lemma}
\aliascntresetthe{lemma}
\crefname{lemma}{lemma}{lemmas}
\Crefname{lemma}{Lemma}{Lemmas}

\newaliascnt{proposition}{theorem}
\newtheorem{proposition}[proposition]{Proposition}
\aliascntresetthe{proposition}
\crefname{proposition}{proposition}{propositions}
\Crefname{proposition}{Proposition}{Propositions}

\newaliascnt{corollary}{theorem}
\newtheorem{corollary}[corollary]{Corollary}
\aliascntresetthe{corollary}
\crefname{corollary}{corollary}{corollaries}
\Crefname{corollary}{Corollary}{Corollaries}

\newaliascnt{conjecture}{theorem}

\aliascntresetthe{conjecture}
\crefname{conjecture}{conjecture}{conjectures}
\Crefname{conjecture}{Conjecture}{Conjectures}

\theoremstyle{definition}
\newaliascnt{definition}{theorem}
\newtheorem{definition}[definition]{Definition}
\aliascntresetthe{definition}
\crefname{definition}{definition}{definitions}
\Crefname{definition}{Definition}{Definitions}

\newaliascnt{remark}{theorem}
\newtheorem{remark}[remark]{Remark}
\aliascntresetthe{remark}
\crefname{remark}{remark}{remarks}
\Crefname{remark}{Remark}{Remarks}

\newaliascnt{question}{theorem}

\aliascntresetthe{question}
\crefname{question}{question}{questions}
\Crefname{question}{Question}{Questions}

\title{Lower Bounds for Approximating the Vietoris-Rips Filtration}

\author{Kenneth McCabe}
\address{Department of Mathematics, Northeastern University}
\email{mccabe.ke@northeastern.edu}

\begin{document}

\begin{abstract}
The Vietoris--Rips filtration $\rips(-)$ is a standard tool for analyzing the shape of data within topological data analysis. Beginning with seminal work of Sheehy, a substantial amount of research has centered on constructing linear-size sparse approximations to $\rips(-)$ and related filtrations for metric spaces of bounded doubling dimension. We show that this geometric assumption is necessary in a precise sense. Working in the framework of homotopy interleavings, we show that for any fixed $c \in [1, \sqrt{2})$, there exists a family of finite metric spaces for which any finitely presented $c$-approximation to $\rips(-)$ has exponential size. We also show that for any fixed $c \geq 1$, there exists a family of finite metric spaces for which any finitely presented $c$-approximation to $\rips(-)$ has superlinear size, yielding an obstruction to linear-size approximations for any fixed approximation factor. Both results extend to the intrinsic \v{C}ech filtration and to any bifiltration containing $\rips(-)$ as a $1$-parameter slice, including the function-Rips, degree-Rips, and subdivision-Rips bifiltrations.
\end{abstract}

\maketitle

\section{Introduction}

For a finite metric space $X$ and scale $r \geq 0$, the \emph{Vietoris-Rips complex} $\rips(X)_r$ is the simplicial complex whose simplices are the nonempty subsets of $X$ of diameter at most $2r$. Letting $r$ vary gives the \emph{Vietoris-Rips filtration} $\rips(X)$. The Vietoris-Rips filtration is a central tool within topological data analysis, typically via persistent homology computations on metric data, and has been studied extensively in its own right. However, as the $k$-skeleton of $\rips(X)$ has $\Theta(|X|^{k+1})$ simplices, direct computations can become infeasible as the size of $X$ or the homological dimension of interest grows. This motivates the search for filtrations of asymptotically smaller size whose persistent homology closely approximates that of $\rips(-)$.

\subsection{Related Work}\label{Sec:Related-Work}
We use \emph{approximation} informally throughout this section to mean a construction whose persistent homology is close to that of the target; the precise notion varies by paper. Our approximation results use the language of \emph{homotopy interleavings} \cite{blumberg2023universality}, a notion that implies approximation at the level of persistent homology, but is not implied by it. By an \emph{exact model} of a filtration $\mathcal{F}$ we mean a functor valued in simplicial complexes that is weakly equivalent to $\mathcal{F}$; see \Cref{sec:Preliminaries} for precise definitions. 

\subsubsection{Sparse Filtrations}
 For finite metric spaces $X$ of bounded doubling dimension, Sheehy \cite{sheehyLinearSizeApproximations2013} showed that $\rips(X)$ admits a $(1+\epsilon)$-approximation of size $O(|X|)$ for any fixed $\epsilon > 0$. Botnan and Spreemann \cite{botnan_approximating_2015} subsequently extended this result to \v{C}ech filtrations of Euclidean point clouds, and further improvements and variants have been developed in \cite{cavanna15geometric, choudhary_improved_2021, choudhary_polynomial-sized_2019, sheehy_sparse_2021,dey_simba_2019, BuchetChazalOudotSheehy2016, ChoudharyKerberRaghvendra2019}. Complementing this line of work, Edelsbrunner et al. \cite{edelsbrunnerMaximumPersistent2026} showed via direct geometric arguments that for point clouds in $\mathbb R^d$, only a linear number of homological features in Vietoris-Rips and \v Cech filtrations can persist over an interval of fixed length.

Without geometric assumptions on $X$, the situation is more delicate. Choudhary et al. \cite{choudhary_polynomial-sized_2019} achieved an $O(\mathrm{polylog}(|X|))$-approximation to $\rips(X)$ of size $|X|^{O(1)}$ for arbitrary finite metric spaces $X$. The approximation factor, however, grows with $|X|$. Brun and Blaser \cite{brun_sparse_2019} defined a $(1+\epsilon)$-approximation to \v Cech filtrations of point clouds in arbitrary metric spaces, extending the construction of \cite{cavanna15geometric}, but did not give a formal size analysis. In a recent preprint, Leit\~ao \cite{leitao_its_2026} constructed a $3$-approximation to $\rips(-)$ for arbitrary metric spaces, but also did not prove any size bounds.

Analogous questions have been studied for multiparameter filtrations. The multicover bifiltration $\mathcal{M}(-)$, introduced by Sheehy \cite{sheehy12multicover} and studied further by Edelsbrunner and Osang \cite{edelsbrunner2021multicover} and Corbet et al.\ \cite{corbet2023computing}, is a density-sensitive \cite{blumberg2024stability} bifiltration for point sets $X$ in $\mathbb{R}^d$ admitting exact models of size $O(|X|^{d+1})$; no asymptotically smaller models are currently known. Buchet et al.\ \cite{buchetSparseHigher2023} introduced a linear-size sparse $(1+\epsilon)$-approximation to $\mathcal{M}(-)$ when the cover multiplicity $\mu$ is fixed, though the size bound depends exponentially on $\mu$. Lesnick and McCabe \cite{lesnick2024nerve, lesnickSparseApproximation2024} studied Sheehy's subdivision-Rips bifiltration $\mathcal{SR}(-)$ \cite{sheehy12multicover}, a density-sensitive refinement of $\rips(-)$ defined for arbitrary finite metric spaces. They constructed a $\sqrt{2}$-approximation to $\mathcal{SR}(-)$ for arbitrary metric spaces, and a $(1+\epsilon)$-approximation for metric spaces of bounded doubling dimension, both with polynomial-size skeleta. Hellmer and Spali\'nski \cite{hellmerDensitySensitive2024} independently gave a related construction via bifiltered Dowker complexes. Alonso \cite{alonso_sparse_2025} subsequently obtained a linear-size $(1+\epsilon)$-approximation to $\mathcal{M}(-)$ for fixed $d$ and $\epsilon$, and extended this result to $\mathcal{SR}(-)$ for metric spaces of bounded doubling dimension.

\subsubsection{Lower Bounds}
For exact models of $\rips(-)$, several lower bounds can be inferred from results on Betti numbers of Vietoris-Rips complexes: Goff \cite{goff_extremal_2011} constructed finite point sets $X$ in Euclidean space whose Vietoris--Rips complexes have $i^{\mathrm{th}}$ Betti number $\Omega(|X|^{i/2})$. Adams and Virk \cite{adams_lower_2024} obtained related lower bounds for Vietoris--Rips complexes of hypercube graphs. Adamaszek \cite{adamaszek_extremal_2014} exhibited an $n$-vertex flag complex whose total Betti number is $\Theta(4^{n/5})$, and proved that this is the maximum possible. Since any flag complex can be realized as the Vietoris-Rips complex of a suitable metric \cite[Appendix~A]{beers_extremal_2025_arXiv}, this implies that exact models of $\rips(-)$ have worst-case exponential size. Improving upon Adamaszek's result, Beers and Botnan \cite{beers_extremal_2025} showed that for any $i$, the $i^{\mathrm{th}}$-Betti number of a flag complex on $n$ vertices is maximized by the flag complex of the Tur\'an graph $T(n, i+1)$. When $5$ divides $n$, setting $i=n/5 -1$ recovers Adamaszek's example \cite[Example~6.1]{adamaszek_extremal_2014}. Tur\'an graphs will play a central role in our results as well; see \Cref{Thm:Main-Theorem} and \Cref{Rmk:Turan}.

In contrast, lower bounds on the size of approximations of $\rips(-)$ and related filtrations are less understood. Choudhary et al.\ \cite{choudhary_polynomial-sized_2019} proved a superpolynomial lower bound for the \v{C}ech filtration of Euclidean point clouds: for any fixed $\gamma \in (0,1)$, there exist finite $X \subseteq \mathbb R^d$ such that every $(1+\delta)$-approximation with $\delta < 1/\log^{1+\gamma}|X|$ contains $n^{\Omega(\log \log n)}$ intervals in its persistence barcode. The approximation factor, however, must shrink with $|X|$, and the authors noted their methods do not extend to $\rips(-)$. For the subdivision-Rips bifiltration, Lesnick and McCabe \cite{lesnick2024nerve} showed that exact models have exponential size for a large class of planar point sets, and that no $c$-approximation with polynomially-sized skeleta exists for arbitrary metric spaces when $c \in [1, \sqrt 2)$.

\subsection{Contributions}
We prove lower bounds on the size of $c$-approximations, in the sense of homotopy interleavings, to $\rips(-)$ and related filtrations. Our main results are as follows.
 
\begin{enumerate}
    \item For any fixed $c \in [1, \sqrt{2})$, there exists an infinite family of finite metric spaces for which any finitely presented $c$-approximation to $\rips(-)$ has exponential size (\Cref{Thm:Main-Theorem}). To the author's knowledge, this is the first explicit lower bound on the size of $c$-approximations for $\rips(-)$.
    \item For any fixed $c \geq 1$, there exists an infinite family of finite metric spaces for which any finitely presented $c$-approximation to $\rips(-)$ has superlinear size (\Cref{Thm:Superlinear-Bound}). This shows that the linear-size $(1+\epsilon)$-approximations available for metric spaces of bounded doubling dimension do not extend to arbitrary metric spaces for any fixed $\epsilon > 0$.
    \item Both results extend to the intrinsic \v{C}ech filtration and to any bifiltration containing $\rips(-)$ as a 1-parameter slice, including the function-Rips \cite{carlssonTheoryMultidimensional2009}, degree-Rips \cite{lesnickInteractiveVisualization2015a}, and subdivision-Rips bifiltrations (\Cref{Cor:Bifiltration-Exponential,Cor:Bifiltration-Polynomial}). In the case of subdivision-Rips, this extends a result of Lesnick and McCabe \cite[Corollary~1.8\,(ii)]{lesnick2024nerve}.
\end{enumerate}
 \sloppypar
Contributions $(1)$ and $(2)$ rely on the observation that any finitely presented $c$-approximation to a filtration $\mathcal{G}$ must be at least as large as the rank of the structure map $H_i(\mathcal{G})_r \to H_i(\mathcal{G})_{c^2 r}$; see \Cref{Lem:Factorization}. We construct families of metric spaces for which this rank is large. In both cases, the metric spaces achieving our lower bounds are the vertex set of a graph equipped with twice the shortest path metric. For the exponential bound, we use the Tur\'an graphs $T(3n,n)$; for the superlinear bound, we use incidence graphs of generalized polygons for $c \in [1, \sqrt{6})$, and Lazebnik--Ustimenko--Woldar graphs $\mathrm{CD}(k,p)$ \cite{lazebnik_ustimenko_woldar_1995} for all $c \geq 1$. We prove the main results in \Cref{Sec:Main-Results}.

\section{Preliminaries}
\label{sec:Preliminaries}

We give the key notation and definitions that we will use throughout the paper.

Let $\Simp$ denote the category of finite abstract simplicial complexes and simplicial maps, which we regard as a subcategory of the category $\Top$ of topological spaces and continuous maps via geometric realization. We regard a poset $P$ as a category in the usual way, with object set $P$ and a morphism $p \to q$ for each $p \leq q$. For any category $\mathbf{C}$, functor $\mathcal{F}\colon P \to \mathbf{C}$, and $p \leq q$ in $P$, we write $\mathcal{F}(p)$ as $\mathcal{F}_p$ and the morphism $\mathcal{F}(p \leq q)\colon \mathcal{F}_p \to \mathcal{F}_q$ as $\mathcal{F}_{p \to q}$. We call the morphisms $\mathcal{F}_{p \to q}$ \emph{structure maps}.

\subsection{Simplicial Complexes and (Bi)filtrations}
\sloppypar

If $P=T_1 \times \cdots \times T_n$ is the Cartesian product of $n$ totally ordered sets, we give $P$ the \emph{product partial order} given by $(t_1, \dots, t_n)\leq (s_1, \dots, s_n)$ iff $t_i \leq s_i$ for all $i$. We will primarily be interested in the case $P=[0, \infty)$ and $P=T \times [0, \infty)$ for some totally ordered set $T$.

A \emph{simplicial filtration} is a functor $\mathcal{F}\colon [0, \infty) \to \Simp$ such that each structure map is an inclusion. A  \emph{bifiltration} is a functor $\mathcal{B}\colon T \times [0,\infty) \to \Simp$ such that all structure maps are inclusions.

We let $(X,\partial)$ be a finite metric space. For $r \geq 0$, the \emph{(Vietoris-)Rips complex} $\rips(X)_r$ is the simplicial complex
\[
\rips(X)_r = \{\sigma \subset X \mid \sigma\neq \emptyset,\ \diam(\sigma) \leq 2r\},
\]
where $\diam(\sigma) = \max_{x,y \in \sigma}\, \partial(x,y)$. Allowing $r$ to vary yields the \emph{Rips filtration} $\rips(X)\colon [0,\infty) \to \Simp$.

For $x\in X$ and $r \geq 0$, we let $B(x,r)$ denote the closed ball of radius $r$ centered at $x$. The \emph{intrinsic \v{C}ech complex} $\mathcal{I}(X)_r$ at scale $r \geq 0$ is the simplicial complex
\[
\mathcal{I}(X)_r = \left\{ \sigma \subset X \;\big|\; \sigma \neq \emptyset,\ \textstyle\bigcap_{x \in \sigma} B(x,r) \neq \emptyset \right\}.
\]
Allowing $r$ to vary yields the \emph{intrinsic \v{C}ech filtration} $\mathcal{I}(X)\colon [0,\infty) \to \Simp$. The stability of $\mathcal I(X)$ was studied in \cite[Section~4.2.2]{chazalPersistenceStability2014}.

A \emph{graph} is a simplicial complex of dimension at most one. Given a graph $G$, the \emph{clique complex} $\Cl(G)$ is the largest simplicial complex with $1$-skeleton $G$; that is, a finite set $\sigma \subset V(G)$ is a simplex of $\Cl(G)$ if and only if every pair of vertices in $\sigma$ is connected by an edge in $G$. Note that $\rips(X)_r = \Cl(\mathcal{G}(X)_r)$, where $\mathcal{G}(X)_r$ is the graph with vertex set $X$ and an edge $[x,y]$ if and only if $\partial(x,y) \leq 2r$. The \emph{girth} of a graph $G$, denoted $g(G)$, is the length of a shortest cycle in $G$; if $G$ is acyclic, we set $g(G) = \infty$.

The \emph{join} of two simplicial complexes $K$ and $L$ on disjoint vertex sets, denoted $K \ast L$, is the simplicial complex with simplices
\[
K \ast L = \{\sigma \cup \tau \mid \sigma \in K,\ \tau \in L\},
\]
where we allow $\sigma$ or $\tau$ to be empty.  The join is associative, and we write $K^{\ast \ell}$ for the $\ell$-fold iterated join of $K$ with itself, defined by $K^{\ast 1} := K$ and $K^{\ast(\ell+1)} := K^{\ast \ell} \ast K$.

One has the following standard K\"{u}nneth-type formula for the reduced homology of simplicial joins. Note that we always consider homology with field coefficients.

\begin{lemma}\label{Lem:Join-Homology}
    Let $K$ and $L$ be simplicial complexes. Then for every $i \geq 0$,
    \[
    \widetilde{H}_{i+1}(K \ast L) \cong \bigoplus_{p+q=i}
    \widetilde{H}_p(K) \otimes \widetilde{H}_q(L).
    \]
\end{lemma}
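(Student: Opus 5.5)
The plan is to pass to geometric realizations and reduce to the Künneth theorem for products. The basic tool is the standard homeomorphism $|K \ast L| \cong |K| \ast |L|$. Here the right-hand side is the topological join, the quotient of $|K|\times|L|\times[0,1]$ that collapses $|L|$ at $t=0$ and $|K|$ at $t=1$. Since simplicial homology agrees with singular homology of the realization, it suffices to prove the formula for the topological join $A \ast B$ of two finite CW complexes $A=|K|$ and $B=|L|$.

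First I would dispose of the degenerate cases. If $K$ or $L$ is empty, the convention $K\ast\emptyset = K$ makes the reduced statement hold under the convention $\widetilde H_{-1}(\emptyset)=\field$. If one factor is nonempty and acyclic (for example a cone), the join is contractible and both sides vanish. So the main case has $K$ and $L$ nonempty.

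In that case I would use Mayer--Vietoris on the cover of $A\ast B$ by the two open sets $U=\{t<2/3\}$ and $V=\{t>1/3\}$.
\begin{itemize}
\item $U$ deformation retracts onto $A$, and $V$ deformation retracts onto $B$.
\item $U\cap V$ deformation retracts onto $A\times B$, and under this retraction the inclusions become the two projections.
\end{itemize}
The reduced Mayer--Vietoris sequence then reads
\[
\cdots\to \widetilde H_{i+1}(A)\oplus\widetilde H_{i+1}(B)\to \widetilde H_{i+1}(A\ast B)\to \widetilde H_i(A\times B)\xrightarrow{(p_{A*},p_{B*})} \widetilde H_i(A)\oplus \widetilde H_i(B)\to\cdots
\]
Over a field, Künneth gives $H_i(A\times B)\cong\bigoplus_{p+q=i}H_p(A)\otimes H_q(B)$. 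Splitting $H_0=\field\oplus\widetilde H_0$ decomposes $\widetilde H_i(A\times B)$ into three pieces: $\widetilde H_i(A)\otimes H_0(\mathrm{pt})$, $H_0(\mathrm{pt})\otimes\widetilde H_i(B)$, and $\bigoplus_{p+q=i}\widetilde H_p(A)\otimes\widetilde H_q(B)$.

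The projection map is surjective in every degree, since it is split by the inclusions $A\times\{b_0\}$ and $\{a_0\}\times B$. Its kernel is exactly the mixed summand $\bigoplus_{p+q=i}\widetilde H_p(A)\otimes\widetilde H_q(B)$. Surjectivity in degree $i+1$ means the map out of $\widetilde H_{i+1}(A)\oplus\widetilde H_{i+1}(B)$ into $\widetilde H_{i+1}(A\ast B)$ is zero. Hence $\widetilde H_{i+1}(A\ast B)$ maps isomorphically onto that kernel, which is the claimed formula.

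The step needing the most care is identifying the Mayer--Vietoris maps with the projections and checking the kernel statement with reduced groups. I would handle it by naturality of the Künneth cross product with respect to the projections.

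An alternative, fully simplicial route avoids topology. It uses the identity $\widetilde C_{*}(K\ast L)\cong \Sigma(\widetilde C_*(K)\otimes \widetilde C_*(L))$ for augmented chain complexes, which holds up to a degree shift by one, with simplices $\sigma\cup\tau$ corresponding to $\sigma\otimes\tau$ and the empty simplex in degree $-1$. The algebraic Künneth theorem over $\field$ then gives the result directly. I would mention this as a check on the degree bookkeeping.
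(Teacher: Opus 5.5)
Your proposal is correct. The paper gives no argument of its own here and only cites Milnor's Lemma~2.1, an integral statement whose extra $\mathrm{Tor}$ terms vanish over $\field$; your Mayer--Vietoris plus K\"unneth argument, with the inclusions identified as the projections and their kernel computed via naturality of the cross product, is the standard proof of that lemma specialized to field coefficients. The one inaccuracy is your separate ``acyclic factor'' case: it is unnecessary, since your main case already covers all nonempty $K$ and $L$, and its claim that the join is contractible fails for factors that are merely $\field$-acyclic (e.g.\ $S^0\ast\mathbb{RP}^2$ over $\field=\mathbb{Q}$), although the homological conclusion you draw from it is still true.
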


\begin{proof}
    See, e.g., \cite[Lemma~2.1]{Milnor1956UniversalBundlesII}.
\end{proof}

Given a simplicial complex $K$ on vertex set $V$ with $|V| = n$, the \emph{Alexander dual} of $K$ with respect to $V$ is
\[
K^\vee = \bigl\{S \in 2^V \setminus \{\emptyset, V\} \;\big|\; V \setminus S \notin K\bigr\}.
\]
See \Cref{Fig:Duality} for an example. 

The following result relates the reduced homology of a simplicial complex with that of its Alexander dual---see \cite{bjorner_note_2009} for a short and self-contained proof.

\begin{lemma}[Alexander Duality \cite{bjorner_note_2009}]
\label{Lem:Alexander-Duality}
Let $K$ be a simplicial complex on vertex set $V$ with $|V| = n$. Then for every $i \in \mathbb{Z}$,
\[
\widetilde{H}_i(K^\vee) \cong \widetilde{H}_{n-i-3}(K).
\]
\end{lemma}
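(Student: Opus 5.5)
Our plan is the standard combinatorial proof of Alexander duality through the simplicial chain complex, following Björner–Tancer; the paper cites this result, so it suffices to outline that argument. First we fix conventions. If $K$ is the full simplex $2^V$, then $K^\vee$ is empty. If $K$ is the void complex, then $K^\vee$ is the boundary of the simplex. Both sides are computable in these cases. Throughout we use augmented chain complexes, so that $\widetilde H_{-1}$ of the empty complex is $\field$. We then check the identity directly in these degenerate cases, for example $\widetilde H_{-1}(\emptyset)=\field$ against $\widetilde H_{n-2}(\partial\Delta)=\field$. This lets us assume that $K$ is a proper nonempty subcomplex of the full simplex $\Delta=2^V$.

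The main step is a chain-level pairing. Let $C_*$ be the augmented chain complex of $\Delta$, which is acyclic. The subcomplex $K$ gives a short exact sequence $0\to C_*(K)\to C_*(\Delta)\to C_*(\Delta,K)\to 0$, and acyclicity yields $\widetilde H_i(K)\cong H_{i+1}(\Delta,K)$. The relative chains $C_*(\Delta,K)$ have as basis the faces $S\notin K$. Complementation $S\mapsto V\setminus S$ identifies these with the faces of $K^\vee$, together with the possible extra sets $\emptyset$ and $V$, which we treat separately. A face $S$ of dimension $j$ has complement of dimension $n-j-2$. With suitable signs, namely $\pm$ the sign of the permutation $(S,V\setminus S)$, the relative boundary of $C(\Delta,K)$ becomes the coboundary of the augmented cochain complex of $K^\vee$. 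This gives $H_{j}(\Delta,K)\cong \widetilde H^{\,n-j-2}(K^\vee)$. Over a field, cohomology and homology have equal dimension, so
\[
\widetilde H_i(K)\cong H_{i+1}(\Delta,K)\cong \widetilde H^{n-i-3}(K^\vee)\cong \widetilde H_{n-i-3}(K^\vee).
\]
Reindexing with $i\mapsto n-i-3$ gives the stated form.

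We expect the main obstacle to be the sign bookkeeping, that is, checking that complementation with the chosen signs really intertwines the boundary and the coboundary. The cleanest way to handle this is the exterior-algebra viewpoint. There complementation is the Hodge-star-type map $e_S\mapsto \pm e_{V\setminus S}$, and contraction by $\sum_v e_v^*$ corresponds to wedging with $\sum_v e_v$. With that in place, the identity reduces to the Leibniz sign rule.

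The second delicate point is the boundary terms. The face $\emptyset$ belongs to $K$, and $V\notin K$. The set $V$ is excluded from $K^\vee$, which matches the augmentation degree. These must line up with the convention that $\emptyset\in K^\vee$ exactly when $K\neq 2^V$. We check this directly in the low and high degrees, where the augmentation enters.
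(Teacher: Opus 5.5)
The paper gives no argument for this lemma beyond the pointer to Bj\"orner--Tancer, so your outline supplies what the paper outsources rather than departing from a proof in the text. Your route is correct. It has three steps:
\begin{itemize}
\item $\widetilde{H}_i(K)\cong H_{i+1}(2^V,K)$, by acyclicity of the augmented chains of the simplex.
\item The degree-reversing isomorphism $C_j(2^V,K)\to \widetilde{C}^{\,n-j-2}(K^\vee)$, $S\mapsto \varepsilon(S)\,(V\setminus S)^*$, where $\varepsilon(S)$ is the sign of the shuffle $(S,V\setminus S)$. Counting the inversions that involve the moved vertex shows this map intertwines the relative boundary with the coboundary up to the sign $(-1)^{j}$, which does not affect (co)homology.
\item Universal coefficients over $\field$ to pass from cohomology to homology.
\end{itemize}

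Compared with the bare citation, your argument buys two things. First, it makes the field-coefficient step explicit. Bj\"orner--Tancer pair homology of one complex with \emph{cohomology} of the other over an arbitrary ring, and the homology-to-homology form stated in the paper relies on the paper's standing field convention. Second, it pins down a convention the paper's definition leaves implicit. The paper deletes $\emptyset$ from $K^\vee$, but the augmentation must still be present exactly when $K\neq 2^V$. If augmentation were always present, the statement would fail for $K=2^V$: there $K^\vee$ has no simplices, so one would get $\widetilde{H}_{-1}(K^\vee)=\field$, whereas $\widetilde{H}_{n-2}(K)=0$.

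Two minor points. In your second degenerate case, $K$ should be the complex $\{\emptyset\}$ with no vertices, which is what your check $\widetilde{H}_{-1}=\field$ presupposes. The genuinely void complex instead has dual $2^V$ under the unrestricted definition $K^\vee=\{S\subseteq V : V\setminus S\notin K\}$, and there both sides vanish. With that unrestricted definition and augmented chains throughout, complementation is an exact bijection between the relative faces $S\notin K$ and the faces of $K^\vee$, so none of the degenerate cases actually needs separate treatment.
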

\begin{figure}[t]
\centering
\begin{tikzpicture}[
  scale=1.0,
  line cap=round,
  line join=round,
  every node/.style={font=\small}
]

\coordinate (a1) at (0,0);
\coordinate (a2) at (1.8,0);
\coordinate (a3) at (0.9,1.55);
\coordinate (a4) at (2.9,0.78);

\fill[blue!15] (a1)--(a2)--(a3)--cycle;
\draw (a1)--(a2)--(a3)--cycle;

\filldraw[fill=black,draw=black] (a1) circle[radius=1.8pt] node[below left] {$1$};
\filldraw[fill=black,draw=black] (a2) circle[radius=1.8pt] node[below right] {$2$};
\filldraw[fill=black,draw=black] (a3) circle[radius=1.8pt] node[above] {$3$};
\filldraw[fill=black,draw=black] (a4) circle[radius=1.8pt] node[right] {$4$};

\draw[->,thick] (3.8,0.78) -- (5.2,0.78)
  node[midway,above] {$K \mapsto K^\vee$};

\coordinate (b1) at (6.2,0);
\coordinate (b2) at (8.0,0);
\coordinate (b3) at (7.1,1.55);

\draw (b1)--(b2)--(b3)--cycle;

\filldraw[fill=black,draw=black] (b1) circle[radius=1.8pt] node[below left] {$1$};
\filldraw[fill=black,draw=black] (b2) circle[radius=1.8pt] node[below right] {$2$};
\filldraw[fill=black,draw=black] (b3) circle[radius=1.8pt] node[above] {$3$};

\end{tikzpicture}
\caption{A simplicial complex (left) and its Alexander dual (right).}
\label{Fig:Duality}
\end{figure}

\subsection{Persistence Modules}

Let $\field$ be a fixed field and let $\Vect$ denote the category of $\field$-vector spaces and linear maps. A \emph{persistence module} is a functor $M\colon P \to \Vect$. For $p \in P$, define the \emph{interval module} $\field^{\langle p \rangle}\colon P \to \Vect$ by
\[
\field^{\langle p \rangle}_x =
\begin{cases}
\field & \text{if } p \leq x, \\
0 & \text{otherwise,}
\end{cases}
\qquad
\field^{\langle p \rangle}_{x \to y} =
\begin{cases}
\mathrm{id}_\field & \text{if } p \leq x, \\
0 & \text{otherwise.}
\end{cases}
\]
We say $M$ is \emph{free} if $M \cong \bigoplus_{p \in B} \field^{\langle p \rangle}$ for some multiset $B \subset P$. A \emph{presentation} of $M$ is a morphism of free modules $\partial\colon F_1 \to F_0$ such that $M \cong \operatorname{coker}(\partial)$; we say $M$ is \emph{finitely presented} if $F_0$ and $F_1$ can be chosen to be finitely generated. A presentation is \emph{minimal} if the ranks of $F_0$ and $F_1$ are as small as possible; for $P = T_1 \times \cdots \times T_n$ with each $T_i$ totally ordered, a minimal presentation exists and is unique up to isomorphism \cite{lesnickSparseApproximation2024}. We write $\beta_0(M)$ and $\beta_1(M)$ for the ranks of $F_0$ and $F_1$ in a minimal presentation, called the \emph{zeroth} and \emph{first Betti numbers} of $M$, respectively. We refer the reader to \cite{botnanIntroductionMultiparameter2023} for further details on algebraic aspects of persistence modules.

\subsection{Size of \textbf{SCpx}-valued Functors}

For a simplicial filtration $\mathcal{F}$, each simplex $\sigma$ has a unique birth index, so the total number of simplices in $\bigcup_{t \in T} \mathcal{F}_t$ is a natural and unambiguous notion of the \emph{size} of $\mathcal{F}$. For an arbitrary $\Simp$-valued functor, however, a simplex $\sigma$ may appear and reappear at different indices, making such a simplex count ill-defined. Since we wish to consider approximations by arbitrary functors, as many natural constructions that arise in TDA are not filtrations \cite{deyComputingTopological2014b, kerberBarcodesTowers2019}, we require a notion of size that applies in this level of generality. The following definition, taken from \cite{lesnick2024nerve}, serves this purpose and recovers the simplex count in the filtration case. See \cite[Definition~2.3 and Remark~2.4]{lesnick2024nerve} for further discussion.

Given a functor $\mathcal{F}\colon P \to \Simp$, taking simplicial chains pointwise yields a chain complex
\[
\cdots \to C_2 \mathcal{F} \to C_1 \mathcal{F} \to C_0 \mathcal{F}
\]
of persistence modules $C_j \mathcal{F}\colon P \to \Vect$. We call $\mathcal F$ \emph{finitely presented} if $C_i \mathcal F$ is finitely presented for every $i \geq 0$, and $C_i \mathcal F=0$ for sufficiently large $i$.

For $P = T_1 \times \cdots \times T_n$ with each $T_i$ totally ordered, the \emph{size} of a finitely presented functor $\mathcal{F}\colon P \to \Simp$ is
\[
\beta_1(C_0 \mathcal{F}) + \sum_{j=0}^{\infty} \beta_0(C_j \mathcal{F}).
\]
We denote the size of $\mathcal{F}$ by $|\mathcal{F}|$.

\subsection{Homotopy Interleavings}

A category is said to be \emph{thin} if for any objects $x$ and $y$, there is at most one morphism from $x$ to $y$. For $c \geq 1$, let $I^c$ be the thin category with object set $[0, \infty) \times \{0, 1\}$ and a morphism $(r, i) \to (s, j)$ if and only if either
\begin{enumerate}
    \item[(i)] $rc \leq s$, or
    \item[(ii)] $i = j$ and $r \leq s$.
\end{enumerate}
We then have functors $E_0, E_1\colon [0, \infty) \to I^c$ mapping $r$ to $(r, 0)$ and $(r, 1)$, respectively. For any category $\mathbf{C}$ and functors $\mathcal{F}, \mathcal{F}'\colon [0, \infty) \to \mathbf{C}$, a \emph{$c$-interleaving} between $\mathcal{F}$ and $\mathcal{F}'$ is a functor $Z\colon I^c \to \mathbf{C}$ such that $Z \circ E_0 = \mathcal{F}$ and $Z \circ E_1 = \mathcal{F}'$. If such a $Z$ exists, we say $\mathcal{F}$ and $\mathcal{F}'$ are \emph{$c$-interleaved}.

We extend interleavings to the $2$-parameter case as follows. Let $I^{(1,c)}$ be the thin category with object set $T \times [0, \infty) \times \{0, 1\}$ and a morphism $(t, r, i) \to (t', s, j)$ if and only if either
\begin{enumerate}
    \item[(i)] $(t, cr) \leq (t', s)$, or
    \item[(ii)] $i = j$ and $(t, r) \leq (t', s)$.
\end{enumerate}
We have functors $E_0, E_1\colon T \times [0,\infty) \to I^{(1,c)}$ sending $(t, r)$ to $(t, r, 0)$ and $(t, r, 1)$, respectively. For functors $\mathcal{B}, \mathcal{B}'\colon T \times [0,\infty) \to \mathbf{C}$, a \emph{$c$-interleaving} between $\mathcal{B}$ and $\mathcal{B}'$ is a functor $Z\colon I^{(1,c)} \to \mathbf{C}$ such that $Z \circ E_0 = \mathcal{B}$ and $Z \circ E_1 = \mathcal{B}'$. Note that our definition of $2$-parameter interleaving considers shifts only the $[0, \infty)$ parameter.

When $\mathcal{F}, \mathcal{G}\colon [0, \infty) \to \Simp$ are filtrations and the interleaving maps are inclusions, a $c$-interleaving between $\mathcal{F}$ and $\mathcal{G}$ is equivalent to the conditions
\[
    \mathcal{F}_r \subseteq \mathcal{G}_{cr} \quad \text{and} \quad
    \mathcal{G}_r \subseteq \mathcal{F}_{cr} \quad \text{for all } r \geq 0.
\]

\begin{definition}[Weak Equivalence]\label{Def:Weak-Equivalence}
For functors $\mathcal{F}, \mathcal{F}'\colon P \to \Top$, a natural transformation $\eta\colon \mathcal{F} \to \mathcal{F}'$ is an \emph{objectwise homotopy equivalence} if each component $\eta_p\colon \mathcal{F}_p \to \mathcal{F}'_p$ is a homotopy equivalence. We say $\mathcal{F}$ and $\mathcal{F}'$ are \emph{weakly equivalent}, and write $\mathcal{F} \simeq \mathcal{F}'$, if they are connected by a zigzag of objectwise homotopy equivalences:
\[
\begin{tikzcd}[ampersand replacement=\&,column sep=2ex,row sep=2ex]
   \& \mathcal{W}_1\ar["\simeq",swap]{dl}\ar["\simeq"]{dr}  \&  \&
   \cdots\ar["\simeq",swap]{dl}\ar["\simeq"]{dr}  \&  \&   \mathcal{W}_n
   \ar["\simeq",swap]{dl}\ar["\simeq"]{dr}  \\
\mathcal{F} \&  \& \mathcal{W}_2 \&  \& \mathcal{W}_{n-1} \&  \& \mathcal{F}'.
\end{tikzcd}
\]
\end{definition}

\begin{definition}[Homotopy Interleaving {\cite{blumberg2023universality}}]\label{Def:Homotopy-Interleaving}
For $c \geq 1$, functors $\mathcal{F}, \mathcal{G}$ from $[0,\infty)$ or $T \times [0,\infty)$ to $\Top$ are \emph{$c$-homotopy interleaved} if there exist $c$-interleaved functors $\mathcal{F}', \mathcal{G}'$ with $\mathcal{F} \simeq \mathcal{F}'$ and $\mathcal{G} \simeq \mathcal{G}'$. In this case, we say $\mathcal{G}$ is a \emph{$c$-approximation} to $\mathcal{F}$.
\end{definition}

\section{Main Results}
\label{Sec:Main-Results}

The key tool underlying all of our results is the following lemma, which lower bounds the size of any finitely presented $c$-approximation by the rank of a structure map in the persistent homology module of the target filtration. A version of this argument appears implicitly in the proof of \cite[Corollary~1.5\,(ii)]{lesnick2024nerve}.

\begin{lemma}\label{Lem:Factorization}
Let $c \geq 1$, let $\mathcal{G}\colon [0,\infty) \to \Simp$ be a simplicial filtration, and let $\mathcal{F}\colon [0,\infty) \to \Simp$ be a finitely presented $c$-approximation to $\mathcal{G}$. Then for any $i \geq 0$ and $r \geq 0$,
\[
|\mathcal{F}| \geq \dim H_i(\mathcal{F})_{cr} \geq
\operatorname{rank}\bigl(H_i(\mathcal{G})_r \to H_i(\mathcal{G})_{c^2r}\bigr).
\]
\end{lemma}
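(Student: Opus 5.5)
Two inequalities need proving. We treat them separately and take homology with coefficients in $\field$ throughout.

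\emph{First inequality: $|\mathcal{F}| \geq \dim H_i(\mathcal{F})_{cr}$.}
Fix a scale $s = cr$.
\begin{itemize}
\item Since $C_j\mathcal{F}$ is finitely presented over $[0,\infty)$, it has a minimal presentation $F_1 \to F_0 \to C_j\mathcal{F}$ with $\operatorname{rank} F_0 = \beta_0(C_j\mathcal{F})$.
\item Evaluating at $s$ gives a surjection $(F_0)_s \to (C_j\mathcal{F})_s$. Hence $\dim (C_j\mathcal{F})_s \leq \dim (F_0)_s \leq \beta_0(C_j\mathcal{F})$.
\item Since $H_i(\mathcal{F})_s$ is a subquotient of $(C_i\mathcal{F})_s$, we get $\dim H_i(\mathcal{F})_s \leq \beta_0(C_i\mathcal{F}) \leq |\mathcal{F}|$.
\end{itemize}
This step is routine.

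\emph{Second inequality: the rank bound.}
Homology is invariant under weak equivalence, so we may transfer everything to an honest interleaving.
\begin{itemize}
\item By \Cref{Def:Homotopy-Interleaving}, there are $c$-interleaved functors $\mathcal{F}' \simeq \mathcal{F}$ and $\mathcal{G}' \simeq \mathcal{G}$, valued in $\Top$.
\item A zigzag of objectwise homotopy equivalences induces, after applying singular homology $H_i$, a zigzag of natural isomorphisms of persistence modules. Inverting the backward arrows gives natural isomorphisms $H_i(\mathcal{F}) \cong H_i(\mathcal{F}')$ and $H_i(\mathcal{G}) \cong H_i(\mathcal{G}')$.
\item Simplicial homology of a finite complex agrees naturally with singular homology of its geometric realization, so these isomorphisms apply to the $\Simp$-valued $\mathcal{F}$ and $\mathcal{G}$.
\item Consequently $\dim H_i(\mathcal{F})_{cr} = \dim H_i(\mathcal{F}')_{cr}$, and
\[
\operatorname{rank}\bigl(H_i(\mathcal{G})_r \to H_i(\mathcal{G})_{c^2r}\bigr) = \operatorname{rank}\bigl(H_i(\mathcal{G}')_r \to H_i(\mathcal{G}')_{c^2r}\bigr).
\]
\end{itemize}
The only point needing care is naturality, so that structure maps correspond and ranks are preserved. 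Even this is mild.

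\emph{Factoring through $\mathcal{F}'_{cr}$.}
Let $Z\colon I^c \to \Top$ be the interleaving with $Z\circ E_0 = \mathcal{G}'$ and $Z \circ E_1 = \mathcal{F}'$; swapping the indices is harmless by symmetry. Since $I^c$ is thin, the morphism $(r,0) \to (c^2r,0)$ equals the composite $(r,0) \to (cr,1) \to (c^2 r,0)$. Both arrows exist by condition (i): $rc \leq cr$ and $cr\cdot c \leq c^2 r$. Applying $Z$ and then $H_i$, the structure map $H_i(\mathcal{G}')_r \to H_i(\mathcal{G}')_{c^2r}$ factors through $H_i(\mathcal{F}')_{cr}$. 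Its rank is therefore at most $\dim H_i(\mathcal{F}')_{cr} = \dim H_i(\mathcal{F})_{cr}$, which is finite by the first step.

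Combining the two steps gives the chain of inequalities in \Cref{Lem:Factorization}. The main (and only modest) obstacle is the transfer of homology across weak equivalences together with the identification of simplicial and singular homology. Everything else is the thin-category factorization.
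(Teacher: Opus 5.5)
Your proposal is correct and follows essentially the same argument as the paper. You transfer $H_i$ along the weak equivalences, factor the structure map $H_i(\mathcal{G}')_r \to H_i(\mathcal{G}')_{c^2r}$ through $H_i(\mathcal{F}')_{cr}$ via the composite $(r,0)\to(cr,1)\to(c^2r,0)$ in the thin category $I^c$, and bound $\dim H_i(\mathcal{F})_{cr}$ by $\beta_0(C_i\mathcal{F}) \leq |\mathcal{F}|$ using the subquotient argument.
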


\begin{proof}
Since $\mathcal{F}$ is a $c$-approximation to $\mathcal{G}$, there exist functors $\mathcal{F}', \mathcal{G}'\colon [0,\infty) \to \Top$ with $\mathcal{F} \simeq \mathcal{F}'$ and $\mathcal{G} \simeq \mathcal{G}'$, together with a $c$-interleaving $Z\colon I^c \to \Top$ satisfying $Z \circ E_0 = \mathcal{G}'$ and $Z \circ E_1 = \mathcal{F}'$.

For each $r \geq 0$, the morphisms $(r,0) \to (cr,1) \to (c^2r,0)$ in $I^c$ compose to the morphism $(r,0) \to (c^2r,0)$. Applying $H_i \circ Z$ yields a factorization
\[
H_i(\mathcal{G}')_r \longrightarrow H_i(\mathcal{F}')_{cr}
\longrightarrow H_i(\mathcal{G}')_{c^2r}
\]
of the structure map $H_i(\mathcal{G}')_{r \to c^2r}$, so $\operatorname{rank}(H_i(\mathcal{G}')_r \to H_i(\mathcal{G}')_{c^2r}) \leq \dim H_i(\mathcal{F}')_{cr}$. Since $\mathcal{F} \simeq \mathcal{F}'$ and $\mathcal{G} \simeq \mathcal{G}'$, applying $H_i$ yields isomorphisms of persistence modules $H_i(\mathcal{F}) \cong H_i(\mathcal{F}')$ and $H_i(\mathcal{G}) \cong H_i(\mathcal{G}')$, which gives the second inequality.

For the first inequality, $H_i(\mathcal{F})_{cr}$ is a quotient of a subspace of $(C_i\mathcal{F})_{cr}$, so
\[
\dim H_i(\mathcal{F})_{cr} \leq \dim(C_i\mathcal{F})_{cr} \leq
\beta_0(C_i\mathcal{F}) \leq |\mathcal{F}|. \qedhere
\]
\end{proof}

\subsection{Lower Bounds for the Vietoris-Rips Filtration}

We are now ready to present our lower bounds for approximations to the Vietoris-Rips filtration.

\begin{figure}[t]
\centering
\begin{tikzpicture}[scale=1.15, line cap=round, line join=round]

  \coordinate (a1) at (0,3);
  \coordinate (a2) at (0,1.5);
  \coordinate (a3) at (0,0);

  \coordinate (b1) at (2.2,3);
  \coordinate (b2) at (2.2,1.5);
  \coordinate (b3) at (2.2,0);

  \coordinate (c1) at (4.4,3);
  \coordinate (c2) at (4.4,1.5);
  \coordinate (c3) at (4.4,0);

  \foreach \u in {a1,a2,a3}
    \foreach \v in {b1,b2,b3}
      \draw (\u) -- (\v);

  \foreach \u in {a1,a2,a3}
    \foreach \v in {c1,c2,c3}
      \draw (\u) -- (\v);

  \foreach \u in {b1,b2,b3}
    \foreach \v in {c1,c2,c3}
      \draw (\u) -- (\v);

\foreach \p in {a1,a2,a3,b1,b2,b3,c1,c2,c3}
  \filldraw[fill=black,draw=black] (\p) circle[radius=2pt];

\end{tikzpicture}
\caption{The graph $G_3$.}
\label{Fig:G_3}
\end{figure}

\begin{theorem}\label{Thm:Main-Theorem}
For every $c \in [1, \sqrt{2})$, there exists an infinite family of finite metric spaces $\{X_n\}$ such that any finitely presented $c$-approximation to $\rips(X_n)$ has size at least $2^{|X_n|/3}$.
\end{theorem}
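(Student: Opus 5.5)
We take $X_n$ to be the vertex set of the Tur\'an graph $G_n = T(3n,n)$, the complete $n$-partite graph with $n$ parts of size $3$ (so $G_3$ is the graph in \Cref{Fig:G_3}). We equip $X_n$ with twice the shortest-path metric. Thus distinct vertices in different parts are at distance $2$, and distinct vertices in the same part are at distance $4$, since they have a common neighbor as soon as $n \geq 2$. Consequently $\rips(X_n)_r$ is discrete for $r<1$, equals $\Cl(G_n)$ for $1 \leq r < 2$, and is the full simplex on $X_n$ for $r \geq 2$. Fix $c \in [1,\sqrt 2)$ and take $r=1$, so that $c^2 r < 2$. Then the structure map $H_i(\rips(X_n))_1 \to H_i(\rips(X_n))_{c^2}$ is the identity of $H_i(\Cl(G_n))$. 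By \Cref{Lem:Factorization}, any finitely presented $c$-approximation $\mathcal F$ satisfies $|\mathcal F| \geq \dim H_i(\Cl(G_n))$ for every $i$. It therefore suffices to find some $i$ with $\dim H_i(\Cl(G_n)) \geq 2^n = 2^{|X_n|/3}$.

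To compute this homology, observe that the clique complex of a complete multipartite graph is the join of the clique complexes of the parts. Each part is an independent set of size $3$, whose clique complex is the discrete space $D_3$ on three points. Hence
\[
\Cl(G_n) = D_3^{\ast n},
\]
which follows directly from the definition of the join: a set of vertices is a clique iff it contains at most one vertex from each part. Now $\widetilde H_0(D_3) \cong \field^2$, and all other reduced homology of $D_3$ vanishes. Applying \Cref{Lem:Join-Homology} inductively on $n$ shows that $\widetilde H_{j}(D_3^{\ast n})$ is concentrated in degree $j = n-1$, where it has dimension $2^n$. In the induction step only the term $p=n-2$, $q=0$ survives, contributing $\field^{2^{n-1}} \otimes \field^2$.

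For $n \geq 2$ we have $i = n-1 \geq 1$, so reduced and unreduced homology agree in this degree. Combining with the first paragraph gives
\[
|\mathcal F| \geq \dim H_{n-1}(\rips(X_n))_{c} \geq 2^n = 2^{|X_n|/3},
\]
and the family $\{X_n\}_{n\geq 2}$ is infinite.

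None of the steps is hard. The point needing the most care is that the metric distinguishes the two thresholds correctly: within-part pairs must be at distance exactly $4$, so that the Rips complex stays equal to $\Cl(G_n)$ on all of $[1, 2)$. This interval contains $[1, c^2]$ precisely because $c<\sqrt2$, which is where the hypothesis on $c$ enters.
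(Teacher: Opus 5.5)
Your proof is correct and is essentially the paper's own argument: the same Tur\'an graphs $T(3n,n)$ with twice the shortest-path metric, the identification $\Cl(G_n)=D_3^{\ast n}$ with its homology computed inductively via \Cref{Lem:Join-Homology}, and \Cref{Lem:Factorization} applied with $r=1$ and $i=n-1$. You also explicitly check two details the paper leaves implicit, namely that same-part vertices are at distance $4$ and that reduced and unreduced homology agree in degree $n-1\ge 1$; one cosmetic slip is that the middle term of your final display should be $\dim H_{n-1}(\mathcal F)_c$, as in \Cref{Lem:Factorization}, rather than $\dim H_{n-1}(\rips(X_n))_c$.
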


\begin{proof}
For $n \geq 2$, let $G_n$ be the complete $n$-partite graph with $n$ parts of size $3$, and let $X_n$ be the vertex set of $G_n$ equipped with twice the shortest path metric; see \Cref{Fig:G_3} for an example. Note that $|X_n| = 3n$, and that $\rips(X_n)_r = \Cl(G_n)$ for every $r \in [1,2)$.

Let $D_3$ denote the discrete graph on three vertices, viewed as a $0$-dimensional simplicial complex, and write $A_n := D_3^{\ast n}$. One readily checks that $\Cl(G_n) = A_n$. We claim that
\[
\widetilde{H}_i(A_n) \cong
\begin{cases}
\field^{2^n}, & i = n-1, \\
0, & \text{otherwise.}
\end{cases}
\]
We proceed by induction on $n$. The base case $n=1$ holds since $A_1 = D_3$ has $\widetilde{H}_0(D_3) \cong \field^2$ and $\widetilde{H}_i(D_3) = 0$ for $i \geq 1$. Assuming the claim for some $m \geq 1$, we apply \Cref{Lem:Join-Homology} to $A_{m+1} = A_m \ast D_3$ to obtain
\[
\widetilde{H}_{i+1}(A_{m+1}) \cong \bigoplus_{p+q=i} \widetilde{H}_p(A_m) \otimes \widetilde{H}_q(D_3).
\]
By the induction hypothesis, $\widetilde{H}_p(A_m)$ is nonzero only for $p = m-1$, and $\widetilde{H}_q(D_3)$ is nonzero only for $q = 0$. Hence the only nonzero contribution is
\[
\widetilde{H}_m(A_{m+1}) \cong \widetilde{H}_{m-1}(A_m) \otimes \widetilde{H}_0(D_3) \cong \field^{2^m} \otimes \field^2 \cong \field^{2^{m+1}},
\]
and all other reduced homology groups vanish. This establishes the claim.

Since $c^2 \in [1,2)$, we have $\rips(X_n)_1 = \rips(X_n)_{c^2} = A_n$, so the map 
\[
H_{n-1}(\rips(X_n))_1 \to H_{n-1}(\rips(X_n))_{c^2}
\]
 is the identity and has rank $2^n$. By \Cref{Lem:Factorization} applied with $r = 1$ and $i = n-1$, any $c$-approximation $\mathcal{F}$ to $\rips(X_n)$ satisfies $|\mathcal{F}| \geq 2^n = 2^{|X_n|/3}$.
\end{proof}

\begin{remark}\label{Rmk:Turan} 
The graph $G_n$ in the proof of \Cref{Thm:Main-Theorem} is the Turán graph $T(3n, n)$, the complete $n$-partite graph on $3n$ vertices with equal parts of size $3$. The appearance of this graph family is not coincidental: Beers and Botnan \cite{beers_extremal_2025} show that $T(m, i)$ maximizes the $(i-1)$-st Betti number among all flag complexes on $m$ vertices, so $T(3n, n)$ is precisely the flag complex on $3n$ vertices with the largest possible $(n-1)$-st Betti number. When $c = 1$, \Cref{Thm:Main-Theorem} reduces to the statement that exact models of $\rips(-)$ have worst-case exponential size, which was (implicitly) shown by Adamaszek \cite{adamaszek_extremal_2014} using the same family. Tur\'an graphs also appear in Lesnick and McCabe \cite[Corollary~1.5\,(ii)]{lesnick2024nerve}, where they are used to prove exponential lower bounds on the size of $c$-approximations of $\mathcal{SR}(X)$ for $c \in [1, \sqrt 2)$; see also \cite[Remark~4.4]{lesnick2024nerve}. 
\end{remark}

\sloppypar
We now turn to the problem of obtaining lower bounds on the size of $c$-approximations for $\rips(-)$ when $c \geq \sqrt 2$. For $c \in [1, \sqrt 6)$, we can give superlinear lower bounds via incidence graphs of generalized polygons. We give the definitions and properties of these graphs that we will need for \Cref{Cor:Generalized-Polygon-Bounds}, and refer the reader to \cite{van_maldeghem_generalized_1998} for further background.

A \emph{geometry} is a triple $(\mathcal P, \mathcal L, I)$ of \emph{points} $\mathcal P$, \emph{lines} $\mathcal L$, and an \emph{incidence relation} $I \subseteq \mathcal P \times \mathcal L$. We say a point $p \in \mathcal P$ is \emph{incident to} a line $L \in \mathcal L$ if $(p,L) \in I$. The \emph{incidence graph} of $(\mathcal P, \mathcal L, I)$ is the bipartite graph with vertex set $\mathcal P \sqcup \mathcal L$ and an edge $[p, L]$ whenever $p$ is incident to $L$.

A \emph{generalized $n$-gon} is a geometry $\Gamma$ whose incidence graph has diameter $n$, girth $2n$, and minimum vertex degree at least $3$. Every generalized $n$-gon has a well-defined \emph{order $(s, t)$}, where every point is incident to exactly $t+1$ lines and every line to exactly $s+1$ points \cite[Corollary~1.5.3]{van_maldeghem_generalized_1998}; we allow $s = t = \infty$, and call the generalized $n$-gon \emph{finite} when $s, t < \infty$. See \Cref{Fig:Quadrangle} for an example.

We record the existence and sizes of the three families needed for \Cref{Cor:Generalized-Polygon-Bounds}.

\begin{proposition}\label{Prop:n-gons}
\mbox{}
\begin{itemize}
    \item [(1) ]For every prime power $q$, there exists:
\begin{itemize}
    \item [$\bullet$] A generalized quadrangle of order $(q,q)$, with
    \[
    |\mathcal P | = |\mathcal L| = q^3 + q^2 + q + 1.
    \]
    \item [$\bullet$] A generalized hexagon of order $(q,q)$, with
    \[
    |\mathcal P | = |\mathcal L| = q^5 + q^4 + q^3 + q^2 + q + 1
    \]
\end{itemize}
\item [(2)] For every $q$ an odd power of $2$, there exists a generalized octagon of order $(q,q^2)$ with
        \[
        |\mathcal P| = (q+1)(1+q^3)(1+q^6), \qquad |\mathcal L| = (q^2+1)(1+q^3)(1+q^6).
        \]
\end{itemize}
\end{proposition}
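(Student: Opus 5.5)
This proposition collects classical existence results from finite geometry. The plan is to exhibit the standard constructions and read off the point and line counts from the orders $(s,t)$ using the general counting formulas for generalized polygons, citing \cite{van_maldeghem_generalized_1998} for the constructions themselves.

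\emph{Counting formulas.} In a finite generalized $n$-gon of order $(s,t)$, fix a point $p$ and count the points at each even distance $2k$ from $p$ in the incidence graph. Because the girth is $2n$ and the diameter is $n$, the incidence graph restricted to distances less than $n$ from $p$ is a tree. In this tree, points have $t+1$ neighbours and lines have $s+1$ neighbours. The layer counts therefore form geometric-type sums.
\begin{itemize}
\item For $n=4$ the counting gives $|\mathcal P|=(1+s)(1+st)$.
\item For $n=6$ it gives $|\mathcal P|=(1+s)(1+st+s^2t^2)$.
\item For $n=8$ it gives $|\mathcal P|=(1+s)(1+st)(1+s^2t^2)$.
\item In each case $|\mathcal L|$ is obtained by swapping $s$ and $t$.
\end{itemize}
The one nontrivial point is the top layer, at distance $n$ when $n$ is even. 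There each vertex is reached by more than one shortest path, and the count must divide out this multiplicity. The cleanest way to handle it is to double-count the flags between layers $n-1$ and $n$. I expect this top-layer count to be the only genuinely delicate step.

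\emph{Specialising to the three orders.}
\begin{itemize}
\item Order $(q,q)$, $n=4$: $(1+q)(1+q^2)=q^3+q^2+q+1$.
\item Order $(q,q)$, $n=6$: $(1+q)(1+q^2+q^4)=q^5+q^4+q^3+q^2+q+1$.
\item Order $(q,q^2)$, $n=8$: $|\mathcal P|=(1+q)(1+q^3)(1+q^6)$ and $|\mathcal L|=(1+q^2)(1+q^3)(1+q^6)$, as claimed.
\end{itemize}

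\emph{Existence.}
\begin{itemize}
\item For quadrangles, take the symplectic quadrangle $W(q)$. Its points are all points of $PG(3,q)$, and its lines are the totally isotropic lines of a symplectic polarity. This has order $(q,q)$; one can also check directly that $PG(3,q)$ has $q^3+q^2+q+1$ points.
\item For hexagons, take the split Cayley hexagon $H(q)$. It is realised on the points of the parabolic quadric $Q(6,q)$, with lines a distinguished subset of the quadric's lines (Tits). It has order $(q,q)$.
\item For octagons, take the Ree--Tits octagons arising from the Ree groups ${}^2F_4(q)$. These exist exactly when $q=2^{2e+1}$ and have order $(q,q^2)$.
\end{itemize}
Verifying the generalized-polygon axioms for these constructions is substantial classical work, especially for $H(q)$ and the Ree--Tits octagons. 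I would not reprove it, but cite \cite{van_maldeghem_generalized_1998} (Chapters 2 and 3) for existence and for the orders.
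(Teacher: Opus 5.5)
Your proposal is correct and agrees with the paper. The paper's entire proof is a citation to Van Maldeghem's book (Chapter~2), which covers both existence and the point and line counts.

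The only difference is that you derive $|\mathcal P|$ and $|\mathcal L|$ yourself from the order $(s,t)$, and that derivation checks out. The flag double count gives $s^{n/2}t^{n/2-1}$ points at distance $n$. It uses the girth to see that each line at distance $n-1$ has $s$ new points, and the diameter bound to see that all $t+1$ lines through a top-layer point lie at distance $n-1$. As a result, the citation only has to carry the existence of $W(q)$, $H(q)$ and the Ree--Tits octagons with their stated orders.
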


\begin{figure}[t]
\centering
    \begin{tikzpicture}[scale=0.5,line cap=round,line join=round]

\coordinate (v-0)  at ( 0.000,  4.700);
\coordinate (v-1)  at ( 0.977,  4.597);
\coordinate (v-2)  at ( 1.912,  4.294);
\coordinate (v-3)  at ( 2.763,  3.802);
\coordinate (v-4)  at ( 3.493,  3.145);
\coordinate (v-5)  at ( 4.070,  2.350);
\coordinate (v-6)  at ( 4.470,  1.452);
\coordinate (v-7)  at ( 4.674,  0.491);
\coordinate (v-8)  at ( 4.674, -0.491);
\coordinate (v-9)  at ( 4.470, -1.452);
\coordinate (v-10) at ( 4.070, -2.350);
\coordinate (v-11) at ( 3.493, -3.145);
\coordinate (v-12) at ( 2.763, -3.802);
\coordinate (v-13) at ( 1.912, -4.294);
\coordinate (v-14) at ( 0.977, -4.597);
\coordinate (v-15) at ( 0.000, -4.700);
\coordinate (v-16) at (-0.977, -4.597);
\coordinate (v-17) at (-1.912, -4.294);
\coordinate (v-18) at (-2.763, -3.802);
\coordinate (v-19) at (-3.493, -3.145);
\coordinate (v-20) at (-4.070, -2.350);
\coordinate (v-21) at (-4.470, -1.452);
\coordinate (v-22) at (-4.674, -0.491);
\coordinate (v-23) at (-4.674,  0.491);
\coordinate (v-24) at (-4.470,  1.452);
\coordinate (v-25) at (-4.070,  2.350);
\coordinate (v-26) at (-3.493,  3.145);
\coordinate (v-27) at (-2.763,  3.802);
\coordinate (v-28) at (-1.912,  4.294);
\coordinate (v-29) at (-0.977,  4.597);

\draw
(v-0)--(v-1)--(v-2)--(v-3)--(v-4)--(v-5)--(v-6)--(v-7)--(v-8)--(v-9)--
(v-10)--(v-11)--(v-12)--(v-13)--(v-14)--(v-15)--(v-16)--(v-17)--(v-18)--(v-19)--
(v-20)--(v-21)--(v-22)--(v-23)--(v-24)--(v-25)--(v-26)--(v-27)--(v-28)--(v-29)--cycle;

\draw (v-0)--(v-17);
\draw (v-1)--(v-22);
\draw (v-2)--(v-9);
\draw (v-3)--(v-26);
\draw (v-4)--(v-13);
\draw (v-5)--(v-18);
\draw (v-6)--(v-23);
\draw (v-7)--(v-28);
\draw (v-8)--(v-15);
\draw (v-10)--(v-19);
\draw (v-11)--(v-24);
\draw (v-12)--(v-29);
\draw (v-14)--(v-21);
\draw (v-16)--(v-25);
\draw (v-20)--(v-27);

\foreach \i in {0,2,4,6,8,10,12,14,16,18,20,22,24,26,28}{
  \filldraw[fill=blue,draw=black] (v-\i) circle[radius=3pt];
}
\foreach \i in {1,3,5,7,9,11,13,15,17,19,21,23,25,27,29}{
  \filldraw[fill=red,draw=black] (v-\i) circle[radius=3pt];
}

\end{tikzpicture}
\caption{The incidence graph of the generalized quadrangle of order $(2,2)$. The blue vertices correspond to points, while the red vertices correspond to lines.}\label{Fig:Quadrangle}
\end{figure}

\begin{proof}
    See \cite[Chapter~$2$]{van_maldeghem_generalized_1998}.
\end{proof}

The next lemma, which follows directly from \cite[Proposition~$2.2$]{Adamaszek2013}, shows that the Vietoris-Rips filtration of a graph with girth at least $3j+1$ remains topologically stable up to scale $j$.

\begin{lemma}\label{Lem:Collapse}
    Let $G$ be a graph, and let $X := V(G)$ be equipped with twice the shortest path metric. Let $j \geq 1$. If the $1$-skeleton of $\rips(X)_1$, regarded as a graph, has girth at least $3j+1$, then the inclusion $\rips(X)_1 \hookrightarrow \rips(X)_j$ is a homotopy equivalence.
\end{lemma}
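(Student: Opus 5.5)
Since $X$ carries twice the shortest path metric, $\partial(x,y) = 2d_G(x,y)$, and so for integer $r$ the graph $\mathcal{G}(X)_r$ has an edge $[x,y]$ exactly when $d_G(x,y) \leq r$. In other words, $\rips(X)_r = \Cl(G^r)$, where $G^r$ is the $r$-th power of $G$. In particular, the $1$-skeleton of $\rips(X)_1$ is $G$ itself (we may assume $G$ is simple), so the hypothesis says $g(G) \geq 3j+1$. Also, $\rips(X)_r$ only changes at integer values of $r$, so it suffices to compare $\Cl(G) = G$ (no triangles, since girth $\geq 4$) with $\Cl(G^j)$.

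The plan is to reduce to the cited result \cite[Proposition~2.2]{Adamaszek2013}. That result states, in the language of graph powers, that if $g(G) \geq 3j+1$ (equivalently $g(G) > 3j$), then $\rips$ of $G$ at graph scale $j$ (that is, $\Cl(G^j)$) is homotopy equivalent to $G$. First we translate the statement: Adamaszek's Rips complexes use the unscaled path metric with simplices of diameter $\leq j$. Under our convention, $\rips(X)_j$ consists of simplices with $2d_G$-diameter $\leq 2j$, i.e.\ $d_G$-diameter $\leq j$, so the two complexes agree. Second, we check that the equivalence is realized by the inclusion $G = \rips(X)_1 \hookrightarrow \rips(X)_j$, not merely by an abstract homotopy equivalence.

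For the second point, if the cited proposition only gives an abstract equivalence, I would reprove the key mechanism. Adamaszek's argument proceeds by a sequence of collapses, or a retraction, from $\Cl(G^j)$ down to $G$. The large girth guarantees that every simplex of $\Cl(G^j)$, a vertex set of $d_G$-diameter $\leq j$, lies in a ball whose induced subgraph is a tree; the bound $3j+1$ ensures that three pairwise paths of length $\leq j$ cannot form a cycle. Using this, one can build a map $\Cl(G^j) \to G$, sending each simplex to the convex hull of its vertices in the local tree, which is contractible. This map restricts to the identity on $G$, and a contiguity or carrier argument shows it is a homotopy inverse to the inclusion. Since every step preserves $G$ pointwise, the inclusion is a homotopy equivalence. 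I would also cover the degenerate case $j=1$ trivially.

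The main obstacle is this identification of the equivalence with the inclusion map. Once the cited result is confirmed to be a deformation retraction onto $G$, or a collapse preserving $G$, the lemma follows immediately.
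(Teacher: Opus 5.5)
Your proposal is correct and follows the paper's proof. You identify the $1$-skeleton of $\rips(X)_1$ with $G$, write $\rips(X)_1=\Cl(G)$ and $\rips(X)_j=\Cl(G^j)$, and invoke \cite[Proposition~2.2]{Adamaszek2013}, which the paper uses as already asserting that the inclusion $\Cl(G)\hookrightarrow\Cl(G^j)$ is a homotopy equivalence when $g(G)\geq 3j+1$. Your fallback carrier argument is therefore unnecessary but sound in outline; the step you leave implicit for $\iota\circ f\simeq\mathrm{id}$ is that the vertex set of the subtree spanned by a simplex has $d_G$-diameter at most $j$, so it is itself a simplex of $\Cl(G^j)$.
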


\begin{proof}
    Let $H$ be the $1$-skeleton of $\rips(X)_1$. Since $X$ is equipped with twice the shortest path metric on $G$, we have $\rips(X)_1= \Cl(H)$ and also $\rips(X)_j = \Cl (H^j)$, where $H^j$ denotes the $j^{\mathrm{th}}$ graph power of $H$.\footnote{That is, $H^j$ is the graph with vertex set $V(H)$ and an edge between distinct vertices $u,v$ iff their graph distance in $H$ is at most $j$.}
    
    By \cite[Proposition~2.2]{Adamaszek2013}, the assumption $g(H)\geq 3j+1$ implies that the inclusion
\[
\Cl(H)\hookrightarrow \Cl(H^j)
\]
is a homotopy equivalence.
\end{proof}

We now use \Cref{Lem:Collapse} to give superlinear lower bounds on approximations to the Vietoris-Rips filtration. The metric spaces we will use for the lower bounds are the incidence graphs of the three families of generalized polygons in \Cref{Prop:n-gons}.

\begin{theorem}\label{Cor:Generalized-Polygon-Bounds}
For every $c \in [1,\sqrt{6})$, there exists an infinite family of finite metric spaces $\{X_n\}$ such that any finitely presented $c$-approximation to $\rips(X_n)$ has size $\Omega\bigl(|X_n|^{1+\epsilon(c)}\bigr)$, where
\[
\epsilon(c) =
\begin{cases}
1/3, & 1 \leq c < \sqrt{3},\\
1/5, & \sqrt{3} \leq c < 2,\\
1/11, & 2 \leq c < \sqrt{6}.
\end{cases}
\]
\end{theorem}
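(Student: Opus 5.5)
We take $X$ to be the vertex set of the incidence graph $\Gamma$ of a finite generalized $n$-gon from \Cref{Prop:n-gons}, equipped with twice the shortest path metric, and apply \Cref{Lem:Factorization} with $i=1$ and $r=1$. The choice of $n$ depends on $c$:
\begin{itemize}
    \item quadrangles ($n=4$) for $c<\sqrt3$;
    \item hexagons ($n=6$) for $\sqrt3\le c<2$;
    \item octagons ($n=8$) for $2\le c<\sqrt6$.
\end{itemize}
Letting $q$ range over the admissible prime powers gives an infinite family in each case. The lower bound will be $\dim H_1(\Gamma)=|E(\Gamma)|-|V(\Gamma)|+1$, and we then compare this with $|X|=|V(\Gamma)|$.

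\emph{Step 1: the structure map is an isomorphism.} All pairwise distances in $X$ are even integers, so $\rips(X)_r=\rips(X)_{\lfloor r\rfloor}$ for $r\ge 1$. Since $\Gamma$ is bipartite, it has no triangles. Hence $\rips(X)_1=\Cl(\Gamma)=\Gamma$, and its $1$-skeleton is $\Gamma$ itself, with girth $2n$. Set $j=\lfloor c^2\rfloor$, so that $\rips(X)_{c^2}=\rips(X)_j$. The girth hypothesis $2n\ge 3j+1$ of \Cref{Lem:Collapse} holds in each case:
\begin{itemize}
    \item $n=4$, $j\le 2$: $8\ge 7$;
    \item $n=6$, $j\le 3$: $12\ge 10$;
    \item $n=8$, $j\le 5$: $16\ge 16$.
\end{itemize}
By \Cref{Lem:Collapse}, the inclusion $\rips(X)_1\hookrightarrow\rips(X)_{c^2}$ is therefore a homotopy equivalence. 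So the map $H_1(\rips(X))_1\to H_1(\rips(X))_{c^2}$ is an isomorphism, and its rank is $\dim H_1(\Gamma)=|E|-|V|+1$, using that $\Gamma$ is connected (its diameter is finite).

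\emph{Step 2: counting.} Every point lies on $t+1$ lines, so $|E|=|\mathcal P|(t+1)$. We read off the sizes from \Cref{Prop:n-gons}.
\begin{itemize}
    \item Quadrangle: $|V|\sim 2q^3$ and $|E|\sim q^4$, so $|E|-|V|=\Theta(|V|^{4/3})$.
    \item Hexagon: $|V|\sim 2q^5$ and $|E|\sim q^6$, giving exponent $6/5$.
    \item Octagon: $|\mathcal L|\sim q^{11}$ dominates, so $|V|=\Theta(q^{11})$, and $|E|=(q+1)(1+q^3)(1+q^6)(q^2+1)=\Theta(q^{12})$. This gives exponent $12/11$.
\end{itemize}
In each case the edge term dominates the vertex term, so \Cref{Lem:Factorization} gives $|\mathcal F|=\Omega(|X|^{1+\epsilon(c)})$ with $\epsilon(c)$ equal to $1/3$, $1/5$ and $1/11$ respectively.

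\emph{Main obstacle.} The crux is the borderline case $2\le c<\sqrt6$, where $j$ can equal $5$ and the girth inequality $16\ge 16$ is tight. This is why $\sqrt6$ is exactly the threshold. We must also check that the floor identity $\rips(X)_{c^2}=\rips(X)_{\lfloor c^2\rfloor}$ holds, so that \Cref{Lem:Collapse} applies at integer $j$. Finally, the octagon counts need care because $|\mathcal P|\ne|\mathcal L|$.
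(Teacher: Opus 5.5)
Your proposal is correct and follows the same route as the paper. The ingredients match: the incidence graphs of generalized quadrangles, hexagons and octagons with twice the path metric; the floor identity $\rips(X)_{c^2}=\rips(X)_{\lfloor c^2\rfloor}$; the girth check $2n\ge 3\lfloor c^2\rfloor+1$ feeding \Cref{Lem:Collapse}; and \Cref{Lem:Factorization} with $r=i=1$ combined with the Euler characteristic count. You also state explicitly two facts the paper uses implicitly: connectedness of $\Gamma$ (from its finite diameter) and the edge count $|E|=|\mathcal P|(t+1)$.
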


\begin{proof}
Let $j := \lfloor c^2 \rfloor$, and for each range of $c$, let $G_q$ denote the incidence graph of the appropriate generalized $n$-gon from \Cref{Prop:n-gons}; see table below. 

Set $X_q := V(G_q)$ equipped with twice the shortest path metric. Since $G_q$ is bipartite, it is triangle-free, and therefore $\rips(X_q)_1 = \Cl(G_q) = G_q$. Since the metric is integer-valued, we have $\rips(X_q)_{c^2} = \rips(X_q)_j$, and since $g(G_q) \geq 3j+1$ in each case, \Cref{Lem:Collapse} implies that $\rips(X_q)_1 \hookrightarrow \rips(X_q)_j$ is a homotopy equivalence. Hence the map $H_1(\rips(X_q))_1 \to H_1(\rips(X_q))_{c^2}$ is an isomorphism of rank $\dim H_1(G_q)$. Each of the three cases now follows from the Euler characteristic formula 
\[
\dim H_1(G_q) = |E(G_q)| - |V(G_q)| + 1
\]
and the sizes recorded in \Cref{Prop:n-gons}, together with \Cref{Lem:Factorization} applied with $r = 1$ and $i = 1$:

\medskip
\begin{center}
\begin{tabular}{ccccc}
\hline
range of $c$ & $n$ & $g(G_q)$ & $\dim H_1(G_q)$ & $\epsilon(c)$ \\
\hline
$1 \leq c < \sqrt{3}$ & $4$ & $8$ & $q^4 = \Omega(|X_q|^{4/3})$ & $1/3$ \\
$\sqrt{3} \leq c < 2$  & $6$ & $12$ & $q^6 = \Omega(|X_q|^{6/5})$ & $1/5$ \\
$2 \leq c < \sqrt{6}$  & $8$ & $16$ & $q^{12} = \Omega(|X_q|^{12/11})$ & $1/11$ \\
\hline
\end{tabular}
\end{center}
\medskip

\noindent where $|X_q| = \Theta(q^3)$, $\Theta(q^5)$, and $\Theta(q^{11})$ in the three cases respectively.
\end{proof}

We briefly discuss the potential for extending \Cref{Cor:Generalized-Polygon-Bounds} to larger values of $c$. The strategy of \Cref{Cor:Generalized-Polygon-Bounds} requires a finite generalized $n$-gon with girth $2n \geq 3\lfloor c^2\rfloor + 1$. For $c \geq \sqrt{6}$, this would necessitate $n \geq 10$. However, a classical theorem of Feit and Higman shows that this is impossible.

\begin{theorem}[{\cite[Theorem~1]{feit_nonexistence_1964}; \cite[Section~1.7]{van_maldeghem_generalized_1998}}]
Finite generalized $n$-gons exist only for $n \in \{2, 3, 4, 6, 8\}$.
\end{theorem}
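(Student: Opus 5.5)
The plan is the classical eigenvalue argument of Feit and Higman. Let $\Gamma$ be a finite generalized $n$-gon of order $(s,t)$. Since the incidence graph has minimum degree at least $3$, we have $s,t \geq 2$. The first step is to pass from the bipartite incidence graph to the \emph{point graph}. This is the graph on $\mathcal P$ in which two points are adjacent when they lie on a common line; equivalently, when they are at distance $2$ in the incidence graph. Let $m := \lfloor n/2 \rfloor$.

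The girth-$2n$ and diameter-$n$ conditions give a key property: for any two elements at distance $k<n$ in the incidence graph, the geodesic between them is unique. From this I will show that the point graph is distance-regular of diameter $m$, with intersection numbers given explicitly in terms of $s$ and $t$. Concretely, for points at point-graph distance $i$, the number of neighbours at distance $i-1$, at distance $i$, and at distance $i+1$ depends only on $i$, $s$, $t$. The last layer $i=m$ is the only one treated differently, and it depends on the parity of $n$. This gives the count $|\mathcal P|$ and a three-term recurrence $A A_i = b_{i-1}A_{i-1} + a_i A_i + c_{i+1}A_{i+1}$ for the distance matrices $A_i$ of the point graph.

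The second step is spectral. From the recurrence, $A_i = p_i(A)$ for polynomials $p_i$ of degree $i$. The identity $\sum_i A_i = J$ then shows that the eigenvalues of $A$ other than the valency $s(t+1)$ are the roots of an explicit polynomial of degree $m$. After the substitution $\theta = s-1+2\sqrt{st}\cos\phi$, this polynomial becomes a Chebyshev-type expression. The roots correspond to $\phi = \pi k/n$ for $1 \leq k \leq m$, possibly together with the special eigenvalues $-t-1$ and $s-1$.

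Next I compute the multiplicity of each eigenvalue. This uses the standard formula for distance-regular graphs, $\mathrm{mult}(\theta) = |\mathcal P| \big/ \sum_i p_i(\theta)^2/k_i$, where $k_i$ is the number of vertices at distance $i$ from a fixed vertex. The result is a closed expression in $s$, $t$, $\cos(\pi k/n)$ and $\sqrt{st}$.

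The final step, which I expect to be the main obstacle, is the integrality analysis. The multiplicities must be nonnegative integers, so in particular they are rational. Whenever $\cos(\pi k/n)$ is irrational, the Galois conjugate roots must have equal multiplicities. Combining this with the explicit formula forces the trace of $A$ (which is $0$), or the multiplicity expressions themselves, to be incompatible with $s,t\geq 2$ unless $\cos(2\pi/n)$ lies in a field of degree at most $2$. This already restricts $n$ to $\{2,3,4,5,6,8,10,12\}$.

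The surviving cases are handled separately:
\begin{itemize}
\item For odd $n$, I would show directly that $s=t$ and that the rational eigenvalues force a contradiction unless $n=3$. In particular, $n=5$ is impossible.
\item For $n=10$, the multiplicity formula involves $\sqrt{5}$ in a way that cannot cancel, which rules it out.
\item For $n=12$, the multiplicity formula yields a nonintegral value unless $s=1$ or $t=1$. This is excluded by the minimum-degree hypothesis.
\end{itemize}
I anticipate that the delicate bookkeeping is in $n=10$ and $n=12$. There I would first try to isolate the coefficient of the irrational part of $\mathrm{mult}(\theta)$ and show that it vanishes only when $st$ has a special form incompatible with $s,t\geq 2$. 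What remains is $n\in\{2,3,4,6,8\}$, as claimed.
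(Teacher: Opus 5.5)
The paper gives no proof of this statement; it only cites Feit--Higman. Your outline (distance-regular point graph, Chebyshev eigenvalues, Biggs' multiplicity formula, integrality) is the classical argument behind that citation, and your first three steps are sound. There is one slip: with $\theta=s-1+2\sqrt{st}\cos\phi$ the roots are $\phi=2\pi k/n$, not $\pi k/n$. For a quadrangle, the eigenvalue $s-1$ is $\phi=\pi/2$, and for even $n$ the only extra eigenvalue is $-t-1$.

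The genuine gap is the step you flag as the main obstacle, which is really the whole theorem. You assert that integrality forces $\deg_{\mathbb Q}\cos(2\pi/n)\le 2$, but neither tool you name delivers this. The trace condition is vacuous: Biggs' formal multiplicities satisfy $\sum_\theta m(\theta)=|\mathcal P|$ and $\sum_\theta m(\theta)\theta=0$ identically for every intersection array. Equal multiplicities of Galois-conjugate eigenvalues is equivalent to rationality of each multiplicity. For even $n$ that says only the following. Put $u=\sqrt{st}\cos(2\pi/n)$ and $Q(x)=(s+t+2x)(1+st-2x)$. Then $f_1$ is a positive rational multiple of $(st-u^2)/Q(u)$. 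Writing $q>0$ for the rational value of that ratio, $u$ is a root of
\[
(4q-1)x^2-2q(s-1)(t-1)x+st-q(s+t)(1+st),
\]
which is a nonzero polynomial precisely because $s,t\ge 2$. This gives $\deg u\le 2$. That does not bound the degree of $\cos(2\pi/n)$ when $\sqrt{st}\notin\mathbb Q$. For $n=24$ and $st=2a^2$ (e.g.\ $(s,t)=(2,4)$), one has $u=a(1+\sqrt3)/2$, and every $\sqrt{st}\cos(\pi j/12)$ has degree at most $2$. So $n=24$ drops out of your list of survivors with no justification.

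The missing idea is to show $u\in\mathbb Q$. For even $n\ge 6$ a sign argument does it. Here $u>0$, and
\[
4q-1=\frac{4st-(s+t)(1+st)-2(s-1)(t-1)u}{Q(u)}<0,
\]
so the two roots of the quadratic have negative sum $2q(s-1)(t-1)/(4q-1)$. If $u$ were irrational, the other root would be its conjugate $\pm\sqrt{st}\cos(2\pi a/n)$ with $\gcd(a,n)=1$. That conjugate has absolute value at most $u$, so the sum would be nonnegative, a contradiction. Hence $\cos^2(2\pi/n)\in\mathbb Q$, which gives $n\in\{2,4,6,8,12\}$ at once; $n=10$ never needs separate treatment.

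For $n=12$, rationality of $\sqrt{3st}/2$ makes $\sqrt{st}/2$ irrational, with conjugate $-\sqrt{st}/2$ (the parameters for $j=2$ and $j=4$). Since $st-x^2$ is even and $Q(x)-Q(-x)=4(s-1)(t-1)x$, equal multiplicities force $(s-1)(t-1)=0$.

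For odd $n$, once you have proved $s=t$, the multiplicity of $\theta_j$ is a positive rational multiple of $(1-c)/(1+s^2-2sc)$ with $c=\cos(2\pi j/n)$. Rationality then forces $c\in\mathbb Q$ (since $s\ne 1$), hence $n=3$. The mechanism is rationality of multiplicities, not ``rational eigenvalues'': for $n=5$ the eigenvalues are irrational.
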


To handle all $c \geq 1$, we turn instead to the Lazebnik--Ustimenko--Woldar graphs $\mathrm{CD}(k,p)$ \cite{lazebnik_ustimenko_woldar_1995}, which have arbitrarily large girth and superlinear edge density, and exist for all positive odd integers $k$ and all primes $p$ (in fact, for all prime powers, but that is not needed here). The graph $\mathrm{CD}(k,p)$ is defined by taking points and lines to be $k$-tuples over the finite field of order $p$, with incidence relation given by a fixed system of bilinear equations on consecutive coordinate pairs. 

For $c \geq 1$, let $k(c)$ be the smallest odd integer with $k > 1$ and $k \geq 3\lfloor c^2 \rfloor - 4$, and define
\begin{equation}\label{Eq:Superlinear}
    d(c) := k(c) - \left\lfloor \tfrac{k(c)+2}{4} \right\rfloor + 1.
\end{equation}

\begin{lemma}\label{Lem:Graph-Construction}
For any $c \geq 1$ and any prime $p$, the graph $G_p := \mathrm{CD}(k(c), p)$ is connected, bipartite, has girth at least $k+5$, and satisfies
\[
g(G_p) \geq 3\lfloor c^2 \rfloor + 1,
\qquad
|E(G_p)| \geq 2^{-1-1/d(c)}\, |V(G_p)|^{1+1/d(c)}.
\]
In particular, $\dim H_1(G_p) = \Omega\bigl(|V(G_p)|^{1+1/d(c)}\bigr)$ as $p \to \infty$.
\end{lemma}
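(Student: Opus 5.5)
This lemma is essentially a repackaging of known facts about the Lazebnik--Ustimenko--Woldar graphs, so I plan to assemble it from \cite{lazebnik_ustimenko_woldar_1995} rather than prove anything from scratch. Write $k := k(c)$ and $d := d(c)$. The inputs I will quote for odd $k \geq 3$ and prime $p$ are as follows. $D(k,p)$ is a $p$-regular bipartite graph with $p^k$ points and $p^k$ lines, and it has girth at least $k+5$. $\mathrm{CD}(k,p)$ is a connected component of $D(k,p)$, so it inherits bipartiteness, $p$-regularity and the girth bound. Finally, $D(k,p)$ has exactly $p^{t-1}$ components, where $t = \lfloor (k+2)/4 \rfloor$, so each component has $2p^{k-t+1} = 2p^{d}$ vertices. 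By the definition \eqref{Eq:Superlinear}, this means $|V(G_p)| = 2p^{d}$. Connectedness and bipartiteness are then immediate from these cited facts.

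For the girth, the cited bound gives $g(G_p) \geq k+5$. By the choice of $k$, we have $k \geq 3\lfloor c^2\rfloor - 4$, and therefore $g(G_p) \geq 3\lfloor c^2\rfloor + 1$. The condition $k>1$ only ensures that we are in the range where the girth and component-count results apply (they are stated for $k\geq 3$ or $k \geq 2$).

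For the edge count, $p$-regularity gives $|E(G_p)| = \tfrac{p}{2}|V(G_p)|$. Since $|V(G_p)| = 2p^d$, we have $p = (|V(G_p)|/2)^{1/d}$. Substituting,
\[
|E(G_p)| = \tfrac12\,|V(G_p)|\,\bigl(|V(G_p)|/2\bigr)^{1/d} = 2^{-1-1/d}\,|V(G_p)|^{1+1/d},
\]
which gives the claimed inequality, in fact with equality. The final assertion then follows from the Euler characteristic formula for a connected graph, $\dim H_1(G_p) = |E(G_p)| - |V(G_p)| + 1$. Here the edge term has exponent $1+1/d > 1$ in $|V(G_p)|$, so it dominates the linear term as $p\to\infty$.

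The main obstacle is bookkeeping: I need to pin down precisely which versions of the LUW results are being cited. That means checking the exact component count $p^{t-1}$ with $t=\lfloor (k+2)/4\rfloor$ for all odd $k\geq 3$ and all primes $p$, including $p=2$, where some statements require $p$ odd or $k$ in a restricted residue class. It also means checking that the girth bound $k+5$ holds for odd $k$. If the component count is only known as an upper bound $\leq p^{t-1}$, the argument still works, because it yields $|V(G_p)| \geq 2p^d$. But then the vertex count $|V(G_p)| = 2p^{d'}$ with $d' \geq d$, and we would only get exponent $1/d'$ rather than $1/d$. So I would first try to cite the exact count from Lazebnik--Ustimenko--Woldar. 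Failing that, I would restrict to odd primes $p$, where the exact count is established.
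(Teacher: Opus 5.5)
Your skeleton matches the paper's. You quote Lazebnik--Ustimenko--Woldar for connectivity, bipartiteness, $p$-regularity and girth $\geq k+5$. You get $g(G_p)\geq 3\lfloor c^2\rfloor+1$ from $k\geq 3\lfloor c^2\rfloor-4$, the edge bound from $|E(G_p)|=\tfrac p2|V(G_p)|$, and finish with the Euler characteristic. The gap is in the vertex count: you use one that is stronger than needed and false in the generality stated. You assert that $D(k,p)$ has \emph{exactly} $p^{t-1}$ components for every odd $k\geq 3$ and every prime $p$, hence $|V(G_p)|=2p^{d(c)}$. This already fails at $p=2$, $k=3$, and $k(c)=3$ for all $c\in[1,\sqrt3)$. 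There $D(3,2)$ is $2$-regular on $16$ vertices and splits into two $8$-cycles. The component of $(0,0,0)$ has points $(0,0,0),(1,0,0),(0,1,1),(1,1,0)$ and lines $[0,0,0],[1,1,1],[0,1,1],[1,0,0]$. So $|V(\mathrm{CD}(3,2))|=8<16=2p^{d}$, and your claimed equality $|E(G_p)|=2^{-1-1/d}|V(G_p)|^{1+1/d}$ is false there. Your proposed remedy of restricting to odd primes, where the exact count is a separate later result, would prove the lemma only for odd $p$, not for ``any prime $p$'' as stated.

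The missing observation is that the lemma asserts only an inequality. For that you need only $|V(G_p)|\leq 2p^{d(c)}$, i.e.\ a \emph{lower} bound $p^{t-1}$ on the number of components of $D(k,p)$. This is exactly what \cite[Theorem~3.2]{lazebnik_ustimenko_woldar_1995} gives, for all prime powers, and it is what the paper uses. From $p\geq (|V(G_p)|/2)^{1/d(c)}$ one gets $|E(G_p)|\geq 2^{-1-1/d(c)}|V(G_p)|^{1+1/d(c)}$ directly.

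Your fallback discussion has the direction reversed. An upper bound $\leq p^{t-1}$ on the component count yields $|V(G_p)|\geq 2p^{d}$. That constrains $|E(G_p)|$ from \emph{above} in terms of $|V(G_p)|$, so it is not what makes anything work. The bound actually available in the literature already points the right way. If you replace ``exactly $p^{t-1}$ components'' by ``at least $p^{t-1}$ components'' and turn the equality in your edge computation into $\geq$, your argument becomes the paper's proof.
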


\begin{proof}
By \cite[Theorem~3.2]{lazebnik_ustimenko_woldar_1995}, $\mathrm{CD}(k,p)$ is connected, bipartite, $p$-regular, has girth at least $k+5$, and satisfies $|V(G_p)| \leq 2p^{d(c)}$ and $|E(G_p)| = \frac{1}{2}|V(G_p)|\, p$. Since $k(c) \geq 3\lfloor c^2\rfloor - 4$, we have $g(G_p) \geq k(c) + 5 \geq 3\lfloor c^2\rfloor + 1$. From $|V(G_p)| \leq 2p^{d(c)}$ we get $p \geq (|V(G_p)|/2)^{1/d(c)}$, so
\[
|E(G_p)| = \tfrac{1}{2}|V(G_p)|\,p
\geq 2^{-1-1/d(c)}\,|V(G_p)|^{1+1/d(c)}.
\]
Since $G_p$ is connected, $\dim H_1(G_p) = |E(G_p)| - |V(G_p)| + 1 = \Omega(|V(G_p)|^{1+1/d(c)})$ as $p \to \infty$.
\end{proof}

We are now ready to prove the next theorem.

\begin{theorem}\label{Thm:Superlinear-Bound}
For fixed $c \in [1,\infty)$, there exists a positive constant $\alpha(c)$ and an infinite family of finite metric spaces $\{X_p\}$ such that any finitely presented $c$-approximation to $\rips(X_p)$ has size $\Omega\bigl(|X_p|^{1+\alpha(c)}\bigr)$.
\end{theorem}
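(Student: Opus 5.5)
The plan is to follow the template of \Cref{Cor:Generalized-Polygon-Bounds}, replacing the generalized polygons by the Lazebnik--Ustimenko--Woldar graphs of \Cref{Lem:Graph-Construction}. Fix $c \geq 1$ and set $j := \lfloor c^2 \rfloor \geq 1$. For each prime $p$, let $G_p := \mathrm{CD}(k(c),p)$. Let $X_p := V(G_p)$, equipped with twice the shortest path metric. We take $\alpha(c) := 1/d(c)$, where $d(c)$ is as in \eqref{Eq:Superlinear}. This is positive because $d(c) \geq 1$: indeed, $k(c) \geq 3$ gives $k - \lfloor (k+2)/4\rfloor + 1 \geq 1$. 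Since there are infinitely many primes and $|V(G_p)| \to \infty$ (the graph is $p$-regular), $\{X_p\}$ is an infinite family.

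First I identify the relevant Rips complexes. Since $G_p$ is bipartite, it is triangle-free. The metric is twice the path metric, so pairs at distance at most $2$ are exactly the edges of $G_p$. Hence the $1$-skeleton of $\rips(X_p)_1$ is $G_p$ itself and $\rips(X_p)_1 = \Cl(G_p) = G_p$. Because distances are even integers, the complex $\rips(X_p)_r$ only changes at integer values of $r$, so $\rips(X_p)_{c^2} = \rips(X_p)_j$. By \Cref{Lem:Graph-Construction}, $g(G_p) \geq 3j+1$, so \Cref{Lem:Collapse} shows that $\rips(X_p)_1 \hookrightarrow \rips(X_p)_{j}$ is a homotopy equivalence. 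Therefore $H_1(\rips(X_p))_1 \to H_1(\rips(X_p))_{c^2}$ is an isomorphism, and its rank equals $\dim H_1(G_p)$.

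Finally, I apply \Cref{Lem:Factorization} with $r=1$ and $i=1$. Any finitely presented $c$-approximation $\mathcal F$ to $\rips(X_p)$ then satisfies $|\mathcal F| \geq \dim H_1(G_p)$. By the last assertion of \Cref{Lem:Graph-Construction}, this is $\Omega\bigl(|X_p|^{1+1/d(c)}\bigr)$. Explicitly, $|E|-|V|+1 \geq 2^{-1-1/d}|V|^{1+1/d} - |V|$, and the first term dominates the second for large $p$.

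There is no real obstacle here, since all the heavy lifting is in the cited results. The only points needing care are the integrality observation $\rips(X_p)_{c^2} = \rips(X_p)_{\lfloor c^2\rfloor}$ and checking that the hypothesis of \Cref{Lem:Collapse} concerns the $1$-skeleton of $\rips(X_p)_1$, which is exactly $G_p$ by triangle-freeness.
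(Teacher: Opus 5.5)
Your proposal is correct and is essentially the paper's proof. You take $G_p=\mathrm{CD}(k(c),p)$ with twice the path metric and $\alpha(c)=1/d(c)$, use bipartiteness and integrality of the metric to get $\rips(X_p)_1=G_p$ and $\rips(X_p)_{c^2}=\rips(X_p)_{\lfloor c^2\rfloor}$, apply \Cref{Lem:Collapse} via the girth bound of \Cref{Lem:Graph-Construction}, and finish with \Cref{Lem:Factorization} at $r=1$, $i=1$; your extra checks (positivity of $\alpha(c)$, infinitude of the family, and $|E|-|V|+1\geq 2^{-1-1/d}|V|^{1+1/d}-|V|$) just spell out details the paper leaves inside \Cref{Lem:Graph-Construction}.
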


\begin{proof}
Let $j := \lfloor c^2 \rfloor$, $\alpha(c) := 1/d(c)$, and $G_p := \mathrm{CD}(k(c), p)$. By \Cref{Lem:Graph-Construction}, $\{G_p\}$ is an infinite family of connected bipartite graphs with $g(G_p) \geq 3j+1$ and $\dim H_1(G_p) = \Omega\bigl(|V(G_p)|^{1+\alpha(c)}\bigr)$.

For each $p$, let $X_p := V(G_p)$ equipped with twice the shortest path metric. Since $G_p$ is bipartite and hence triangle-free, $\rips(X_p)_1 = \Cl(G_p) = G_p$, and since the metric is integer-valued, $\rips(X_p)_{c^2} = \rips(X_p)_j$. Since $g(G_p) \geq 3j+1$, \Cref{Lem:Collapse} implies that $\rips(X_p)_1 \hookrightarrow \rips(X_p)_j$ is a homotopy equivalence, so the map $H_1(\rips(X_p))_1 \to H_1(\rips(X_p))_{c^2}$ is an isomorphism of rank $\Omega\bigl(|X_p|^{1+\alpha(c)}\bigr)$. Applying \Cref{Lem:Factorization} with $r = 1$ and $i = 1$ gives the result.
\end{proof}

\subsection{Lower Bounds for the Intrinsic \v Cech Filtration}
\label{Sec:Cech-Bounds}

In this section, we show that \Cref{Thm:Main-Theorem,Thm:Superlinear-Bound} extend to the intrinsic \v Cech filtration $\mathcal I(-)$. The first extension uses Alexander Duality (cf. \Cref{Lem:Alexander-Duality}), while the second uses the standard interleaving between $\rips(-)$ and $\mathcal I(-)$.

\begin{corollary}\label{Cor:Intrinsic-Cech-Exponential}
For every $c \in [1,\sqrt{2})$, there exists an infinite family of finite metric spaces $\{X_n\}$ such that any finitely presented $c$-approximation to $\mathcal{I}(X_n)$ has size at least $2^{|X_n|/3}$.
\end{corollary}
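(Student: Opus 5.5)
The plan is to reuse the metric spaces $X_n$ from the proof of \Cref{Thm:Main-Theorem}: the vertex set of the Tur\'an graph $G_n = T(3n,n)$ with twice the shortest path metric. Distinct vertices in different parts are at distance $2$, and distinct vertices in the same part are at distance $4$. The goal is to find a range of scales $[r, c^2 r]$ on which $\mathcal{I}(X_n)$ is constant and has homology of dimension $2^n$ in a single degree. I would then conclude by \Cref{Lem:Factorization}, which applies to any simplicial filtration $\mathcal{G}$ and in particular to $\mathcal{I}(X_n)$.

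First, I will identify $\mathcal{I}(X_n)_r$ for $r \in [2,4)$. All distances lie in $\{0,2,4\}$, so for $y \in X_n$ the ball $B(y,r)$ consists of $y$ together with every vertex outside the part of $y$. Hence a nonempty $\sigma$ is a simplex exactly when some $y$ is within distance $2$ of every element of $\sigma$, i.e.\ $\sigma$ has no element in $y$'s part except possibly $y$ itself. I will check the two cases.
\begin{itemize}
\item If $\sigma$ misses some part entirely, any $y$ in that part works.
\item If $\sigma$ meets some part in exactly one vertex, that vertex serves as $y$.
\end{itemize}
Conversely, if $\sigma$ contains at least two vertices of every part, no $y$ can work, since $\sigma$ contains a vertex $\neq y$ in $y$'s part. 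So $K_n := \mathcal{I}(X_n)_r$ is the complex of subsets that do not contain at least two vertices from every part. This complex is independent of $r \in [2,4)$.

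Next, I will compute the Alexander dual of $K_n$ with respect to $V = X_n$, where $|V| = 3n$. A set $S$ lies in $K_n^\vee$ iff $V\setminus S \notin K_n$, i.e.\ iff $V\setminus S$ contains at least two vertices from each part. Since parts have size $3$, this says $S$ contains at most one vertex from each part. Those sets are exactly the nonempty simplices of $D_3^{\ast n} = A_n$, and none of them equals $V$, so $K_n^\vee = A_n$. The proof of \Cref{Thm:Main-Theorem} showed $\widetilde H_{n-1}(A_n) \cong \field^{2^n}$. Then \Cref{Lem:Alexander-Duality} gives
\[
\widetilde H_{2n-2}(K_n) \cong \widetilde H_{3n-(2n-2)-3}(K_n^\vee) = \widetilde H_{n-1}(A_n) \cong \field^{2^n}.
\]
Since $2n-2 \geq 2$ for $n \geq 2$, reduced and unreduced homology agree in this degree.

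Finally, take $r = 2$. Because $c^2 < 2$, we have $c^2 r < 4$, so $\mathcal{I}(X_n)_2 = \mathcal{I}(X_n)_{2c^2} = K_n$. The structure map on $H_{2n-2}$ is therefore the identity, of rank $2^n$, and \Cref{Lem:Factorization} yields $|\mathcal{F}| \geq 2^n = 2^{|X_n|/3}$. The main obstacle is the combinatorial identification of $\mathcal{I}(X_n)_r$ and verifying that its dual is exactly $A_n$, especially the edge cases about $\emptyset$ and $V$. The rest is bookkeeping already done in \Cref{Thm:Main-Theorem}.
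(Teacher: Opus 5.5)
Your proposal is correct and follows the paper's proof step for step. It uses the same Tur\'an-graph spaces, identifies $\mathcal{I}(X_n)_2$ with $A_n^\vee$ (your $K_n^\vee = A_n$ is equivalent, since Alexander duality is an involution here), applies Alexander duality in degree $2n-2$, and then invokes \Cref{Lem:Factorization} with $r=2$ and $i=2n-2$; your remark that reduced and unreduced homology agree in degree $2n-2\geq 2$ is a small detail the paper leaves implicit.
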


\begin{proof}
Let $G_n$, $X_n$, and $A_n = D_3^{\ast n}$ be as in the proof of \Cref{Thm:Main-Theorem}, so that $\Cl(G_n) = A_n$ and $\widetilde{H}_{n-1}(A_n) \cong \field^{2^n}$ with all other reduced homology vanishing.

Writing $P_1,\dots,P_n$ for the parts of $G_n$, the closed ball of radius $2$ around any $x \in P_j$ is
\[
B(x,2)=\{x\}\cup (X_n\setminus P_j).
\]
Hence for any nonempty $\sigma \subseteq X_n$,
\[
\bigcap_{x\in \sigma} B(x,2)\neq \emptyset
\]
if and only if there exists $j$ such that $|\sigma\cap P_j|\le 1$: indeed, a vertex $y\in P_j$ lies in $\bigcap_{x\in \sigma} B(x,2)$ exactly when every $x\in \sigma\setminus\{y\}$ lies outside $P_j$, i.e.\ when $\sigma$ contains at most one vertex of $P_j$. Since each part has size $3$, this is equivalent to saying that $X_n\setminus \sigma$ contains at least two vertices from some part, i.e.\ $X_n\setminus \sigma \notin A_n$. Therefore
\[
\mathcal I(X_n)_2 = A_n^\vee.
\] By \Cref{Lem:Alexander-Duality},
\[
\widetilde{H}_i(\mathcal{I}(X_n)_2) \cong \widetilde{H}_{|X_n|-i-3}(A_n),
\]
which, since $|X_n| = 3n$, is $\field^{2^n}$ for $i = 2n-2$ and $0$ otherwise.

Since the metric on $X_n$ is integer-valued and $2c^2 < 4$, we have 
\[
\mathcal{I}(X_n)_2 = \mathcal{I}(X_n)_{2c^2},
\]
 so the structure map
\[
H_{2n-2}(\mathcal{I}(X_n))_2 \to H_{2n-2}(\mathcal{I}(X_n))_{2c^2}
\]
 is the identity of rank $2^n$. Applying \Cref{Lem:Factorization} with $r = 2$ and $i = 2n-2$ gives $|\mathcal{F}| \geq 2^n = 2^{|X_n|/3}$.
\end{proof}

\begin{corollary}\label{Cor:Intrinsic-Cech-Superlinear}
For every fixed $c \in [1,\infty)$, there exists a positive constant $\delta(c)$ and an infinite family of finite metric spaces $\{X_p\}$ such that any finitely presented $c$-approximation to $\mathcal{I}(X_p)$ has size $\Omega\bigl(|X_p|^{1+\delta(c)}\bigr)$.
\end{corollary}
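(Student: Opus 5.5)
The plan is to use the standard interleaving between $\rips(-)$ and $\mathcal I(-)$, namely
\[
\mathcal I(X)_r \subseteq \rips(X)_r \subseteq \mathcal I(X)_{2r}.
\]
The first inclusion holds because a common point $z \in \bigcap_{x\in\sigma} B(x,r)$ forces $\diam \sigma \le 2r$. The second holds because, for $\sigma$ of diameter at most $2r$, any $x_0 \in \sigma$ lies in every $B(x,2r)$ with $x \in \sigma$. We use this to transfer the rank bound of \Cref{Thm:Superlinear-Bound} to $\mathcal I(-)$, rather than to relate approximations directly. By \Cref{Lem:Factorization}, it suffices to find $r$ such that $\operatorname{rank}\bigl(H_1(\mathcal I(X_p))_r \to H_1(\mathcal I(X_p))_{c^2 r}\bigr)$ is $\Omega(|X_p|^{1+\delta(c)})$.

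Take $G_p := \mathrm{CD}(k(c'),p)$ with $c' := \sqrt{2}\,c$, so that $\lfloor c'^2 \rfloor = \lfloor 2c^2 \rfloor$. Let $X_p$ be its vertex set with twice the shortest path metric, and set $\delta(c) := 1/d(\sqrt 2 c)$. Choose $r = 1$. The structure map $\mathcal I(X_p)_1 \to \mathcal I(X_p)_{c^2}$ then sits inside the chain of inclusions
\[
\rips(X_p)_{1/2} \subseteq \mathcal I(X_p)_{1} \subseteq \mathcal I(X_p)_{c^2} \subseteq \rips(X_p)_{c^2}.
\]
This chain loses the first step, because $\rips(X_p)_{1/2}$ is discrete. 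To fix this, check directly that $\mathcal I(X_p)_1$ contains the edges of $G_p$: adjacent vertices are at distance $2$, and each lies in the other's ball of radius $1$? No — the balls meet only at integer points, so $B(x,1)\cap B(y,1)=\emptyset$ when $\partial(x,y)=2$.

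So instead take $r = 2$. Since $\partial(x,y) = 2$, we have $y \in B(x,2)\cap B(y,2)$, and hence the edges of $G_p$ lie in $\mathcal I(X_p)_2$. This gives
\[
G_p = \rips(X_p)_1 \subseteq \mathcal I(X_p)_2 \subseteq \mathcal I(X_p)_{2c^2} \subseteq \rips(X_p)_{2c^2}.
\]
The composite is the inclusion $\rips(X_p)_1 \hookrightarrow \rips(X_p)_{2c^2} = \rips(X_p)_{\lfloor 2c^2\rfloor}$, which uses that the metric is integer valued. By \Cref{Lem:Graph-Construction} applied with $c'$, the girth satisfies $g(G_p) \ge 3\lfloor 2c^2\rfloor + 1$. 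Hence \Cref{Lem:Collapse} makes this composite a homotopy equivalence, so on $H_1$ it has rank $\dim H_1(G_p) = \Omega(|X_p|^{1+1/d(\sqrt2 c)})$.

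Since the middle map $H_1(\mathcal I(X_p))_2 \to H_1(\mathcal I(X_p))_{2c^2}$ factors into this isomorphism, its rank is at least $\dim H_1(G_p)$. Applying \Cref{Lem:Factorization} to $\mathcal G = \mathcal I(X_p)$ with $r = 2$ and $i = 1$ then finishes the proof.

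The main obstacle is getting the scales to line up. The \v Cech-to-Rips comparison costs a factor of $2$, so the girth must be chosen for $2c^2$ rather than $c^2$, which is why the argument uses $k(\sqrt2 c)$. It also relies on integrality of the metric, which makes the $r=2$ inclusion $G_p \subseteq \mathcal I(X_p)_2$ work.
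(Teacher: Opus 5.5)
Your proof is correct, but it follows a different route from the paper's. The paper argues at the level of approximations. It sets $\mathcal I'(X)_r := \mathcal I(X)_{\sqrt2 r}$ and notes that $\mathcal I'(X)$ and $\rips(X)$ are strictly $\sqrt2$-interleaved. It then rescales a $c$-approximation $\mathcal F$ of $\mathcal I(X_p)$ to $\mathcal F'_r := \mathcal F_{\sqrt2 r}$, which has the same size, and composes to obtain a $\sqrt2 c$-approximation of $\rips(X_p)$. This lets \Cref{Thm:Superlinear-Bound} apply as a black box with $\sqrt2 c$ in place of $c$.

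You instead apply \Cref{Lem:Factorization} directly to $\mathcal G = \mathcal I(X_p)$ with $r=2$ and $i=1$. You bound the rank of $H_1(\mathcal I(X_p))_2 \to H_1(\mathcal I(X_p))_{2c^2}$ from below by factoring the homotopy equivalence $\rips(X_p)_1 \hookrightarrow \rips(X_p)_{\lfloor 2c^2\rfloor}$ through it; that equivalence comes from \Cref{Lem:Collapse} and the girth bound for $\mathrm{CD}(k(\sqrt2 c),p)$. This is the same style the paper itself uses for \Cref{Cor:Intrinsic-Cech-Exponential}.

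Your route needs only the strict inclusions $\mathcal I(X)_r \subseteq \rips(X)_r \subseteq \mathcal I(X)_{2r}$ and functoriality of homology. It therefore sidesteps both the rescaling step and the composition of a homotopy interleaving with a strict interleaving. The paper uses that composition without comment, and it implicitly relies on the nontrivial fact that homotopy interleavings compose. The paper's route, in exchange, is modular: it transfers any size lower bound for approximations of $\rips(-)$ to $\mathcal I(-)$ at the cost of a factor $\sqrt2$, without redoing the rank computation. Both routes give the same exponent $\delta(c) = 1/d(\sqrt2 c)$.

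In a write-up, delete the abandoned $r=1$ attempt and start directly at $r=2$. The reason $r=1$ fails is simply that $B(x,1)=\{x\}$ in $X_p$, since distinct points are at distance at least $2$.
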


\begin{proof}
Define $\mathcal I'(-): [0, \infty) \to \Simp$ by $\mathcal I'(X)_r := \mathcal I(X)_{\sqrt 2 r}$.

For any finite metric space $X$ and $r \geq 0$, the triangle inequality implies the inclusions
\[
\mathcal{I}(X)_r \subseteq
\rips(X)_r \subseteq \mathcal{I}(X)_{2r}.
\]
Therefore
\[
\mathcal I'(X)_{r/ \sqrt 2} \subseteq \rips(X)_{r} \subseteq \mathcal I'(X)_{\sqrt 2 r},
\]
so $\mathcal I'(X)$ and $\rips(X)$ are $\sqrt 2$-interleaved.

Applying \Cref{Thm:Superlinear-Bound} with $\sqrt2\,c$ in place of $c$, we obtain a positive constant $\delta(c):=\alpha(\sqrt2\,c)$ and a family $\{X_p\}$ such that any finitely presented $\sqrt2\,c$-approximation to $\rips(X_p)$ has size
\[
\Omega\bigl(|X_p|^{1+\delta(c)}\bigr).
\]
If $\mathcal{F}$ is a finitely presented $c$-approximation to $\mathcal{I}(X_p)$, then the linear rescaling $\mathcal F'_r := \mathcal F_{\sqrt 2r}$ is a $c$-approximation to $\mathcal I'(X_p)$, and $|\mathcal F| = |\mathcal F' |$. Composing with the $\sqrt 2$-interleaving above, we get that $\mathcal F'$ is a $\sqrt 2c$-approximation to $\rips(X_p)$, so
\[
| \mathcal F | = |\mathcal{F'}| =
\Omega\bigl(|X_p|^{1+\delta(c)}\bigr). \qedhere
\]
\end{proof}

\subsection{Lower Bounds for Bifiltrations}

Let $\mathcal{F}\colon [0,\infty) \to \Simp$ be a simplicial filtration. We say that a bifiltration $\mathcal{B}$ \emph{contains $\mathcal{F}$ as a slice} if there exists $t_0 \in T$ such that $\mathcal{B}_{t_0,-}\colon [0,\infty) \to \Simp$ is weakly equivalent to $\mathcal{F}$. In this section, we show that \Cref{Thm:Main-Theorem,Thm:Superlinear-Bound} also extend to any bifiltration containing $\rips(-)$ (or, in view of \Cref{Sec:Cech-Bounds}, containing $\mathcal I(-)$) as a slice. 

The following lemma establishes that any approximation to a bifiltration restricts to an approximation to each of its slices, without an increase in size.

\begin{lemma}\label{Lem:Slice}
Let $\mathcal{B}\colon T \times [0,\infty) \to \Simp$ be a bifiltration, and suppose that $\mathcal{F}\colon T \times [0,\infty) \to \Simp$ is a finitely presented $c$-approximation to $\mathcal{B}$. Then for any $t_0 \in T$, the slice $\mathcal{F}_{t_0,-}\colon [0,\infty) \to \Simp$ is a finitely presented $c$-approximation to $\mathcal{B}_{t_0,-}$ satisfying $|\mathcal{F}_{t_0,-}| \leq |\mathcal{F}|$.
\end{lemma}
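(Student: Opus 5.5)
The statement has two parts: the slice is a $c$-approximation, and it is finitely presented of no larger size. I would prove the homotopy-theoretic part by restricting along an inclusion of indexing categories, and the size part by comparing minimal presentations of a module with those of its restriction to a line.

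\emph{The slice is a $c$-approximation.} By \Cref{Def:Homotopy-Interleaving} there are functors $\mathcal{B}', \mathcal{F}'\colon T \times [0,\infty) \to \Top$ with $\mathcal{B} \simeq \mathcal{B}'$ and $\mathcal{F} \simeq \mathcal{F}'$, together with $Z\colon I^{(1,c)} \to \Top$ satisfying $Z\circ E_0 = \mathcal{B}'$ and $Z \circ E_1 = \mathcal{F}'$.

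1. Let $\iota\colon [0,\infty) \to T \times [0,\infty)$ be $r \mapsto (t_0,r)$. Let $J\colon I^c \to I^{(1,c)}$ be $(r,i) \mapsto (t_0,r,i)$.
2. $J$ is a functor. If $rc \le s$ then $(t_0,cr) \le (t_0,s)$. If $i=j$ and $r\le s$ then $(t_0,r)\le(t_0,s)$.
3. We have $J\circ E_k = E_k \circ \iota$ for $k=0,1$. Hence $Z\circ J$ is a $c$-interleaving between $\mathcal{B}'\circ\iota$ and $\mathcal{F}'\circ\iota$.
4. Precomposition with $\iota$ sends natural transformations to natural transformations and preserves objectwise homotopy equivalences. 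So restricting the zigzags along $\iota$ gives $\mathcal{B}_{t_0,-}\simeq \mathcal{B}'\circ\iota$ and $\mathcal{F}_{t_0,-}\simeq\mathcal{F}'\circ\iota$.

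This gives the first claim.

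\emph{Finite presentation and size.} Chains commute with restriction, so $C_j(\mathcal{F}_{t_0,-}) = (C_j\mathcal{F})\circ\iota$. In particular $C_j(\mathcal{F}_{t_0,-})$ vanishes for large $j$. It then suffices to show, for a finitely presented $M\colon T\times[0,\infty)\to\Vect$ with $M' := M\circ\iota$, that
\[
\beta_0(M') \le \beta_0(M) \quad\text{and}\quad \beta_1(M') \le \beta_1(M).
\]
We also need $M'$ to be finitely presented.

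Restriction is exact, so a minimal presentation $F_1\to F_0\to M\to 0$ restricts to a presentation $F_1\circ\iota\to F_0\circ\iota\to M'\to 0$. Each free summand $\field^{\langle(t,r)\rangle}$ restricts to $\field^{\langle r\rangle}$ if $t\le t_0$, and to $0$ otherwise. Hence $F_k\circ\iota$ is free of rank at most $\operatorname{rank}F_k$, and $M'$ is finitely presented.

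Minimal presentations realize the smallest ranks among all presentations. Therefore $\beta_k(M')\le \operatorname{rank}(F_k\circ\iota) \le \beta_k(M)$ for $k=0,1$. Summing over $j$ and adding the $\beta_1(C_0)$ term gives $|\mathcal{F}_{t_0,-}|\le|\mathcal{F}|$.

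\emph{Main obstacle.} The only subtle point is that restriction of a free module is free with no increase in rank. This needs the observation that the down-set condition $t\le t_0$ simply kills summands. The minimality comparison also needs $\beta_0$ and $\beta_1$ to be bounded by the ranks of any presentation, which follows from the uniqueness of minimal presentations cited in the preliminaries.
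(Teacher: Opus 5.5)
Your proposal is correct and follows the paper's proof essentially step for step. You restrict the interleaving $Z$ along $(r,i)\mapsto(t_0,r,i)$ and the zigzags along $\iota$, then restrict a minimal presentation of each $C_j\mathcal{F}$, using that $\field^{\langle(t,r)\rangle}$ restricts to $\field^{\langle r\rangle}$ or $0$ so ranks cannot increase. You are slightly more explicit than the paper (for instance, naming $t\le t_0$ as the survival condition), though the step bounding $\beta_0,\beta_1$ by the ranks of any presentation rests on the existence of a presentation simultaneously minimizing both ranks, not on its uniqueness.
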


\begin{proof}
By definition, there exist functors $\mathcal{F}', \mathcal{B}'\colon T \times [0,\infty) \to \Top$ with $\mathcal{F} \simeq \mathcal{F}'$ and $\mathcal{B} \simeq \mathcal{B}'$, together with a $c$-interleaving $Z\colon I^{(1,c)} \to \Top$ satisfying $Z \circ E_0 = \mathcal{B}'$ and $Z \circ E_1 = \mathcal{F}'$. The assignment $(r,i) \mapsto (t_0, r, i)$ induces a functor $I^c \to I^{(1,c)}$, which by composing with $Z$ yields a $c$-interleaving between $\mathcal{B}'_{t_0,-}$ and $\mathcal{F}'_{t_0,-}$. Restricting the zigzags $\mathcal{F} \simeq \mathcal{F}'$ and $\mathcal{B} \simeq \mathcal{B}'$ to the slice $t = t_0$ gives weak equivalences $\mathcal{F}_{t_0,-} \simeq \mathcal{F}'_{t_0,-}$ and $\mathcal{B}_{t_0,-} \simeq \mathcal{B}'_{t_0,-}$, so $\mathcal{F}_{t_0,-}$ is a $c$-approximation to $\mathcal{B}_{t_0,-}$.

For the size bound, let 
\[
P_1 \to P_0 \to C_j\mathcal{F} \to 0
\]
be a minimal presentation of $C_j\mathcal{F}$. Since exactness is defined pointwise, restriction along $\iota_{t_0}: r \mapsto (t_0, r)$ yields a presentation 
\[
P'_1 \to P'_0 \to C_j(\mathcal{F}_{t_0,-}) \to 0.
\]
Since the restriction of a free summand $\field^{\langle(t,r)\rangle}$ on $T \times [0,\infty)$ is either $0$ or $\field^{\langle r\rangle}$, the size of minimal generating sets of $P_1'$ and $P_0'$ are no greater than those of $P_1$ and $P_0$, respectively. Hence $\mathcal{F}_{t_0,-}$ is finitely presented with $\beta_0(C_j(\mathcal{F}_{t_0,-})) \leq \beta_0(C_j\mathcal{F})$ for all $j \geq 0$ and $\beta_1(C_0(\mathcal{F}_{t_0,-})) \leq \beta_1(C_0\mathcal{F})$, giving $|\mathcal{F}_{t_0,-}| \leq |\mathcal{F}|$.
\end{proof}

Combining \Cref{Lem:Slice} with \Cref{Thm:Main-Theorem,Thm:Superlinear-Bound} immediately yields the following two corollaries.  As the proofs are identical in structure, we prove only the first one.

\begin{corollary}\label{Cor:Bifiltration-Exponential}
For every $c \in [1,\sqrt{2})$, there exists an infinite family of finite metric spaces $\{X_n\}$ such that for any bifiltration $\mathcal{B}$ containing $\rips(X_n)$ as a slice, any finitely presented $c$-approximation to $\mathcal{B}$ has size at least $2^{|X_n|/3}$.
\end{corollary}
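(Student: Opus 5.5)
We take $\{X_n\}$ to be the family from the proof of \Cref{Thm:Main-Theorem}: $X_n$ is the vertex set of the Tur\'an graph $G_n = T(3n,n)$, equipped with twice the shortest path metric, so that $|X_n| = 3n$. Let $\mathcal{B}\colon T \times [0,\infty) \to \Simp$ be a bifiltration containing $\rips(X_n)$ as a slice. Then there is some $t_0 \in T$ with $\mathcal{B}_{t_0,-} \simeq \rips(X_n)$. Let $\mathcal{F}$ be any finitely presented $c$-approximation to $\mathcal{B}$.

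First, we apply \Cref{Lem:Slice}. It says that $\mathcal{F}_{t_0,-}$ is a finitely presented $c$-approximation to $\mathcal{B}_{t_0,-}$, and that $|\mathcal{F}_{t_0,-}| \leq |\mathcal{F}|$. Next, we transfer this to $\rips(X_n)$. By \Cref{Def:Homotopy-Interleaving}, there are $c$-interleaved functors $\mathcal{F}''$ and $\mathcal{B}''$ with $\mathcal{F}_{t_0,-} \simeq \mathcal{F}''$ and $\mathcal{B}_{t_0,-} \simeq \mathcal{B}''$. Weak equivalence is an equivalence relation, since zigzags can be concatenated and reversed. Hence $\rips(X_n) \simeq \mathcal{B}_{t_0,-} \simeq \mathcal{B}''$, and so $\mathcal{F}_{t_0,-}$ is also a $c$-approximation to $\rips(X_n)$.

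Finally, \Cref{Thm:Main-Theorem} gives $|\mathcal{F}_{t_0,-}| \geq 2^{|X_n|/3}$. Combining this with $|\mathcal{F}_{t_0,-}| \leq |\mathcal{F}|$ yields $|\mathcal{F}| \geq 2^{|X_n|/3}$.

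The argument is short, and we do not expect a real obstacle. The one point needing care is the middle step: we must check that "$c$-approximation" is invariant under replacing the target by a weakly equivalent functor. This holds because the definition only ever refers to the target up to weak equivalence.
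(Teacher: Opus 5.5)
Your proposal is correct and follows the paper's own proof essentially verbatim: restrict to the slice $t_0$ via \Cref{Lem:Slice}, then apply \Cref{Thm:Main-Theorem} and combine with $|\mathcal{F}_{t_0,-}| \leq |\mathcal{F}|$. The only difference is that you spell out why $\mathcal{B}_{t_0,-}$ may be replaced by the weakly equivalent $\rips(X_n)$, namely that $\simeq$ is transitive. The paper leaves that step implicit.
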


\begin{proof}
Let $\{X_n\}$ be the family from \Cref{Thm:Main-Theorem}, and let $\mathcal{B}$ be any bifiltration containing $\rips(X_n)$ as a slice, witnessed by some $t_0 \in T$ with $\mathcal{B}_{t_0,-} \simeq \rips(X_n)$. If $\mathcal{F}$ is a finitely presented $c$-approximation to $\mathcal{B}$, then by \Cref{Lem:Slice} the slice $\mathcal{F}_{t_0,-}$ is a $c$-approximation to $\mathcal{B}_{t_0,-} \simeq \rips(X_n)$ with $|\mathcal{F}_{t_0,-}| \leq |\mathcal{F}|$. \Cref{Thm:Main-Theorem} therefore gives $|\mathcal{F}| \geq |\mathcal{F}_{t_0,-}| \geq 2^{|X_n|/3}$.
\end{proof}

\begin{corollary}\label{Cor:Bifiltration-Polynomial}
For fixed $c \in [1,\infty)$, there exists a positive constant $\alpha(c)$ and an infinite family of finite metric spaces $\{X_p\}$ such that for any bifiltration $\mathcal{B}$ containing $\rips(X_p)$ as a slice, any finitely presented $c$-approximation to $\mathcal{B}$ has size $\Omega\bigl(|X_p|^{1+\alpha(c)}\bigr)$.
\end{corollary}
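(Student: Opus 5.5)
The proof follows the same template as \Cref{Cor:Bifiltration-Exponential}, with \Cref{Thm:Superlinear-Bound} used in place of \Cref{Thm:Main-Theorem}. Fix $c \geq 1$. Take $\alpha(c) := 1/d(c)$ and the family $\{X_p\}$, $X_p = V(\mathrm{CD}(k(c),p))$ with twice the shortest path metric, both as constructed in \Cref{Thm:Superlinear-Bound}. That theorem provides a constant $K > 0$ and a threshold $p_0$ such that for every $p \geq p_0$, every finitely presented $c$-approximation to $\rips(X_p)$ has size at least $K|X_p|^{1+\alpha(c)}$. The constants $K$ and $p_0$ depend only on $c$; they come from \Cref{Lem:Graph-Construction} together with \Cref{Lem:Factorization}, which bounds size below by $\dim H_1(G_p)$.

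Now let $\mathcal{B}$ be a bifiltration containing $\rips(X_p)$ as a slice. Fix $t_0 \in T$ with $\mathcal{B}_{t_0,-} \simeq \rips(X_p)$, and let $\mathcal{F}$ be a finitely presented $c$-approximation to $\mathcal{B}$.

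By \Cref{Lem:Slice}, $\mathcal{F}_{t_0,-}$ is a finitely presented $c$-approximation to $\mathcal{B}_{t_0,-}$ with $|\mathcal{F}_{t_0,-}| \leq |\mathcal{F}|$.

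Next we transfer this to $\rips(X_p)$. By definition of homotopy interleaving, $\mathcal{B}_{t_0,-}$ is weakly equivalent to a functor $\mathcal{B}'$ that is $c$-interleaved with some $\mathcal{F}' \simeq \mathcal{F}_{t_0,-}$. Weak equivalence is an equivalence relation, since zigzags concatenate. Therefore $\rips(X_p) \simeq \mathcal{B}_{t_0,-} \simeq \mathcal{B}'$, and the same $\mathcal{F}'$ and interleaving witness that $\mathcal{F}_{t_0,-}$ is a $c$-approximation to $\rips(X_p)$.

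Applying \Cref{Thm:Superlinear-Bound} then gives $|\mathcal{F}| \geq |\mathcal{F}_{t_0,-}| \geq K|X_p|^{1+\alpha(c)}$ for $p \geq p_0$, which is the claimed $\Omega$ bound.

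The only point needing care is that the implicit constant does not depend on $\mathcal{B}$. The $\Omega$ in \Cref{Thm:Superlinear-Bound} is uniform over all approximations of $\rips(X_p)$, and the slice reduction passes to one such approximation. I expect no real obstacle beyond stating this uniformity explicitly and noting the transitivity of $\simeq$.
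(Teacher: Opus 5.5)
Your proposal is correct and is the argument the paper intends. The paper writes out only \Cref{Cor:Bifiltration-Exponential} and says this corollary follows identically, by restricting to the slice via \Cref{Lem:Slice} and then invoking \Cref{Thm:Superlinear-Bound}; your remarks on the transitivity of $\simeq$ and on the uniformity of the implicit constant just make explicit steps the paper leaves implicit.
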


\begin{remark}
Several bifiltrations studied in TDA contain $\rips(X)$ as a slice, so \Cref{Cor:Bifiltration-Exponential,Cor:Bifiltration-Polynomial} apply to them directly. In the \emph{function-Rips bifiltration} \cite{carlssonTheoryMultidimensional2009}, associated to a function $f\colon X \to \mathbb{R}$, the slice at $s = \max_{x \in X} f(x)$ is $\rips(X)_r$. In the \emph{degree-Rips bifiltration} $\mathcal{DR}(X)$ \cite{lesnickInteractiveVisualization2015a}, the slice at degree $k = 0$ is $\rips(X)_r$. For the \emph{subdivision-Rips bifiltration} $\mathcal{SR}(X)$ \cite{sheehy12multicover}, the slice at $k = 1$ is the barycentric subdivision of $\rips(X)_r$, which is naturally homeomorphic to $\rips(X)_r$.
\end{remark}

\bibliographystyle{plainnat}
\bibliography{bibliography}

@article{Adamaszek2013,
  author  = {Micha{\l} Adamaszek},
  title   = {Clique complexes and graph powers},
  journal = {Israel Journal of Mathematics},
  volume  = {196},
  pages   = {295--319},
  year    = {2013},
  doi     = {10.1007/s11856-012-0166-1},
}

@misc{lesnick2024nerve,
  author = {Michael Lesnick and Kenneth McCabe},
  title  = {Nerve models of subdivision bifiltrations},
  year   = {2024},
  note   = {arXiv preprint, \doi{10.48550/arXiv.2406.07679}},
}

@article{blumberg2024stability,
  author  = {Andrew J. Blumberg and Michael Lesnick},
  title   = {Stability of 2-Parameter Persistent Homology},
  journal = {Foundations of Computational Mathematics},
  volume  = {24},
  number  = {2},
  pages   = {385--427},
  year    = {2024},
  doi     = {10.1007/s10208-022-09576-6},
}

@article{lazebnik_ustimenko_woldar_1995,
  author  = {Felix Lazebnik and Vladimir A. Ustimenko and Andrew J. Woldar},
  title   = {A new series of dense graphs of high girth},
  journal = {Bulletin of the American Mathematical Society},
  volume  = {32},
  number  = {1},
  pages   = {73--79},
  year    = {1995},
  doi     = {10.1090/S0273-0979-1995-00569-0},
}

@inproceedings{deyComputingTopological2014b,
  author    = {Tamal K. Dey and Fengtao Fan and Yusu Wang},
  title     = {Computing topological persistence for simplicial maps},
  booktitle = {Proceedings of the 30th Annual Symposium on Computational Geometry},
  pages     = {345--354},
  year      = {2014},
  doi       = {10.1145/2582112.2582165},
}

@article{kerberBarcodesTowers2019,
  author  = {Michael Kerber and Hannah Schreiber},
  title   = {Barcodes of towers and a streaming algorithm for persistent homology},
  journal = {Discrete \& Computational Geometry},
  volume  = {61},
  number  = {4},
  pages   = {852--879},
  year    = {2019},
  doi     = {10.1007/s00454-018-0030-0},
}

@incollection{botnanIntroductionMultiparameter2023,
  author    = {Magnus Bakke Botnan and Michael Lesnick},
  title     = {An introduction to multiparameter persistence},
  booktitle = {Representations of Algebras and Related Structures},
  editor    = {Aslak Bakke Buan and Henning Krause and {\O}yvind Solberg},
  pages     = {77--150},
  publisher = {EMS Press},
  year      = {2023},
  doi       = {10.4171/ecr/19/4},
}

@article{blumberg2023universality,
  author  = {Andrew J. Blumberg and Michael Lesnick},
  title   = {Universality of the homotopy interleaving distance},
  journal = {Transactions of the American Mathematical Society},
  volume  = {376},
  number  = {12},
  pages   = {8269--8307},
  year    = {2023},
  doi     = {10.1090/tran/8738},
}

@article{carlssonTheoryMultidimensional2009,
  author  = {Gunnar Carlsson and Afra Zomorodian},
  title   = {The theory of multidimensional persistence},
  journal = {Discrete \& Computational Geometry},
  volume  = {42},
  number  = {1},
  pages   = {71--93},
  year    = {2009},
  doi     = {10.1007/s00454-009-9176-0},
}

@misc{lesnickInteractiveVisualization2015a,
  author = {Michael Lesnick and Matthew Wright},
  title  = {Interactive visualization of 2-{D} persistence modules},
  year   = {2015},
  note   = {arXiv preprint, \doi{10.48550/arXiv.1512.00180}},
}

@inproceedings{sheehy12multicover,
  author    = {Donald R. Sheehy},
  title     = {A multicover nerve for geometric inference},
  booktitle = {Proceedings of the 24th Canadian Conference on Computational Geometry},
  pages     = {309--314},
  year      = {2012},
  url       = {http://2012.cccg.ca/papers/paper52.pdf},
}

@article{sheehyLinearSizeApproximations2013,
  author  = {Donald R. Sheehy},
  title   = {Linear-size approximations to the {Vietoris}--{Rips} filtration},
  journal = {Discrete \& Computational Geometry},
  volume  = {49},
  number  = {4},
  pages   = {778--796},
  year    = {2013},
  doi     = {10.1007/s00454-013-9513-1},
}

@inproceedings{cavanna15geometric,
  author    = {Nicholas J. Cavanna and Mahmoodreza Jahanseir and Donald R. Sheehy},
  title     = {A geometric perspective on sparse filtrations},
  booktitle = {Proceedings of the 27th Canadian Conference on Computational Geometry},
  pages     = {116--121},
  year      = {2015},
  url       = {https://cccg.ca/proceedings/2015/01.pdf},
}

@article{choudhary_improved_2021,
  author  = {Aruni Choudhary and Michael Kerber and Sharath Raghvendra},
  title   = {Improved approximate {Rips} filtrations with shifted integer lattices and cubical complexes},
  journal = {Journal of Applied and Computational Topology},
  volume  = {5},
  number  = {3},
  pages   = {425--458},
  year    = {2021},
  doi     = {10.1007/s41468-021-00072-4},
}

@article{choudhary_polynomial-sized_2019,
  author  = {Aruni Choudhary and Michael Kerber and Sharath Raghvendra},
  title   = {Polynomial-sized topological approximations using the permutahedron},
  journal = {Discrete \& Computational Geometry},
  volume  = {61},
  number  = {1},
  pages   = {42--80},
  year    = {2019},
  doi     = {10.1007/s00454-017-9951-2},
}

@article{botnan_approximating_2015,
  author  = {Magnus Bakke Botnan and Gard Spreemann},
  title   = {Approximating persistent homology in {Euclidean} space through collapses},
  journal = {Applicable Algebra in Engineering, Communication and Computing},
  volume  = {26},
  number  = {1--2},
  pages   = {73--101},
  year    = {2015},
  doi     = {10.1007/s00200-014-0247-y},
}

@inproceedings{sheehy_sparse_2021,
  author    = {Donald R. Sheehy},
  title     = {A sparse {Delaunay} filtration},
  booktitle = {37th International Symposium on Computational Geometry (SoCG 2021)},
  series    = {Leibniz International Proceedings in Informatics (LIPIcs)},
  volume    = {189},
  pages     = {58:1--58:16},
  year      = {2021},
  doi       = {10.4230/LIPIcs.SoCG.2021.58},
}

@misc{leitao_its_2026,
  author = {Ant{\'o}nio Leit{\~a}o},
  title  = {It's all about covers: Persistent homology of cover refinements},
  year   = {2026},
  note   = {arXiv preprint, \doi{10.48550/arXiv.2602.22784}},
}

@article{edelsbrunner2021multicover,
  author  = {Herbert Edelsbrunner and Georg Osang},
  title   = {The multi-cover persistence of {Euclidean} balls},
  journal = {Discrete \& Computational Geometry},
  volume  = {65},
  number  = {4},
  pages   = {1296--1313},
  year    = {2021},
  doi     = {10.1007/s00454-021-00281-9},
}

@article{corbet2023computing,
  author  = {Ren{\'e} Corbet and Michael Kerber and Michael Lesnick and Georg Osang},
  title   = {Computing the multicover bifiltration},
  journal = {Discrete \& Computational Geometry},
  volume  = {70},
  number  = {2},
  pages   = {376--405},
  year    = {2023},
  doi     = {10.1007/s00454-022-00476-8},
}

@inproceedings{buchetSparseHigher2023,
  author    = {Micka{\"e}l Buchet and Bianca B. Dornelas and Michael Kerber},
  title     = {Sparse higher order {\v{C}}ech filtrations},
  booktitle = {39th International Symposium on Computational Geometry (SoCG 2023)},
  series    = {Leibniz International Proceedings in Informatics (LIPIcs)},
  volume    = {258},
  pages     = {20:1--20:17},
  year      = {2023},
  doi       = {10.4230/LIPIcs.SoCG.2023.20},
}

@misc{lesnickSparseApproximation2024,
  author = {Michael Lesnick and Kenneth McCabe},
  title  = {Sparse approximation of the subdivision-{Rips} bifiltration for doubling metrics},
  year   = {2024},
  note   = {arXiv preprint, \doi{10.48550/arXiv.2408.16716}},
}

@inproceedings{alonso_sparse_2025,
  author    = {{\'A}ngel Javier Alonso},
  title     = {A sparse multicover bifiltration of linear size},
  booktitle = {41st International Symposium on Computational Geometry (SoCG 2025)},
  series    = {Leibniz International Proceedings in Informatics (LIPIcs)},
  volume    = {332},
  pages     = {6:1--6:18},
  year      = {2025},
  doi       = {10.4230/LIPIcs.SoCG.2025.6},
}

@misc{hellmerDensitySensitive2024,
  author = {Niklas Hellmer and Jan Spali{\'n}ski},
  title  = {Density sensitive bifiltered {Dowker} complexes via total weight},
  year   = {2024},
  note   = {arXiv preprint, \doi{10.48550/arXiv.2405.15592}},
}

@article{edelsbrunnerMaximumPersistent2026,
  author  = {Herbert Edelsbrunner and Matthew Kahle and Shu Kanazawa},
  title   = {Maximum persistent {Betti} numbers of {\v{C}}ech complexes},
  journal = {Journal of Applied and Computational Topology},
  volume  = {10},
  number  = {1},
  pages   = {5},
  year    = {2026},
  doi     = {10.1007/s41468-026-00233-3},
}

@book{van_maldeghem_generalized_1998,
  author    = {Hendrik Van Maldeghem},
  title     = {Generalized Polygons},
  publisher = {Springer Basel},
  address   = {Basel},
  year      = {1998},
  doi       = {10.1007/978-3-0348-0271-0},
}

@article{feit_nonexistence_1964,
  author  = {Walter Feit and Graham Higman},
  title   = {The nonexistence of certain generalized polygons},
  journal = {Journal of Algebra},
  volume  = {1},
  number  = {2},
  pages   = {114--131},
  year    = {1964},
  doi     = {10.1016/0021-8693(64)90028-6},
}

@article{brun_sparse_2019,
  author  = {Morten Brun and Nello Blaser},
  title   = {Sparse {Dowker} nerves},
  journal = {Journal of Applied and Computational Topology},
  volume  = {3},
  number  = {1--2},
  pages   = {1--28},
  year    = {2019},
  doi     = {10.1007/s41468-019-00028-9},
}

@article{dey_simba_2019,
  author  = {Tamal K. Dey and Dayu Shi and Yusu Wang},
  title   = {{SimBa}: An efficient tool for approximating {Rips}-filtration persistence via simplicial batch-collapse},
  journal = {ACM Journal of Experimental Algorithmics},
  volume  = {24},
  pages   = {1--16},
  year    = {2019},
  doi     = {10.1145/3284360},
}

@article{adams_lower_2024,
  author  = {Henry Adams and {\v{Z}}iga Virk},
  title   = {Lower bounds on the homology of {Vietoris}--{Rips} complexes of hypercube graphs},
  journal = {Bulletin of the Malaysian Mathematical Sciences Society},
  volume  = {47},
  number  = {3},
  pages   = {72},
  year    = {2024},
  doi     = {10.1007/s40840-024-01663-x},
}

@article{goff_extremal_2011,
  author  = {Michael Goff},
  title   = {Extremal {Betti} numbers of {Vietoris}--{Rips} complexes},
  journal = {Discrete \& Computational Geometry},
  volume  = {46},
  number  = {1},
  pages   = {132--155},
  year    = {2011},
  doi     = {10.1007/s00454-010-9274-z},
}

@inproceedings{beers_extremal_2025,
  author    = {Lies Beers and Magnus Bakke Botnan},
  title     = {Extremal {Betti} numbers and persistence in flag complexes},
  booktitle = {41st International Symposium on Computational Geometry (SoCG 2025)},
  series    = {Leibniz International Proceedings in Informatics (LIPIcs)},
  volume    = {332},
  pages     = {14:1--14:18},
  year      = {2025},
  doi       = {10.4230/LIPIcs.SoCG.2025.14},
}

@misc{beers_extremal_2025_arXiv,
  author = {Lies Beers and Magnus Bakke Botnan},
  title  = {Extremal {Betti} numbers and persistence in flag complexes},
  year   = {2025},
  note   = {arXiv preprint, \doi{10.48550/arXiv.2502.21294}},
}

@article{adamaszek_extremal_2014,
  author  = {Micha{\l} Adamaszek},
  title   = {Extremal problems related to {Betti} numbers of flag complexes},
  journal = {Discrete Applied Mathematics},
  volume  = {173},
  pages   = {8--15},
  year    = {2014},
  doi     = {10.1016/j.dam.2014.04.006},
}

@article{bjorner_note_2009,
  author  = {Anders Bj{\"o}rner and Martin Tancer},
  title   = {Note: Combinatorial {Alexander} duality---a short and elementary proof},
  journal = {Discrete \& Computational Geometry},
  volume  = {42},
  number  = {4},
  pages   = {586--593},
  year    = {2009},
  doi     = {10.1007/s00454-008-9102-x},
}

@article{BuchetChazalOudotSheehy2016,
  author = {Micka{\"e}l Buchet and Fr{\'e}d{\'e}ric Chazal and Steve Y. Oudot and Donald R. Sheehy},
  title = {Efficient and robust persistent homology for measures},
  journal = {Computational Geometry},
  volume = {58},
  pages = {70--96},
  year = {2016},
  doi = {10.1016/j.comgeo.2016.07.001}
}

@inproceedings{ChoudharyKerberRaghvendra2019,
  author = {Aruni Choudhary and Michael Kerber and Sharath Raghvendra},
  title = {Improved topological approximations by digitization},
  booktitle = {Proceedings of the Thirtieth Annual ACM-SIAM Symposium on Discrete Algorithms},
  pages = {2675--2688},
  year = {2019},
  doi = {10.1137/1.9781611975482.166}
}

@article{chazalPersistenceStability2014,
	title = {Persistence stability for geometric complexes},
	volume = {173},
	doi = {10.1007/s10711-013-9937-z},
	language = {en},
	number = {1},
	urldate = {2026-06-17},
	journal = {Geometriae Dedicata},
	author = {Chazal, Frédéric and de Silva, Vin and Oudot, Steve},
	year = {2014}
}

@article{Milnor1956UniversalBundlesII,
  author  = {Milnor, John W.},
  title   = {Construction of Universal Bundles, II},
  journal = {Annals of Mathematics},
  series  = {Second Series},
  volume  = {63},
  number  = {3},
  pages   = {430--436},
  year    = {1956},
  doi     = {10.2307/1970012}
}

\end{document}